\def\XXint#1#2#3{{\setbox0=\hbox{$#1{#2#3}{\int}$}
     \vcenter{\hbox{$#2#3$}}\kern-.5\wd0}}
\renewcommand{\arraystretch}{1.4}
\newtheorem{Theorem}{Theorem}[section]
\newtheorem{Lemma}[Theorem]{Lemma}
\newtheorem{Proposition}[Theorem]{Proposition}
\theoremstyle{definition}
\newtheorem{Remark}[Theorem]{Remark}
\newtheorem{Example}[Theorem]{Example}
\newtheorem{Assumption}[Theorem]{Assumption}
\def\mathref#1{\ifmmode\mathrm{(\ref{#1})}\else(\ref{#1})\fi}
\def\rhx2{\sqrt{1+ r_{h,x}^2}}
\title{Sufficient conditions for unique global solutions in optimal control of semilinear equations with $C^1-$nonlinearity}  
\author{Ahmad Ahmad Ali\footnote{Schwerpunkt Optimierung und Approximation, 
Universit\"at Hamburg, Bundesstra{\ss}e 55, 20146 Hamburg, Germany.}, Klaus Deckelnick\footnote{Institut f\"ur Analysis und Numerik,
Otto--von--Guericke--Universit\"at Magdeburg, Universit\"atsplatz 2,
39106 Magdeburg, Germany} \& Michael Hinze\footnote{Schwerpunkt Optimierung und Approximation, 
Universit\"at Hamburg, Bundesstra{\ss}e 55, 20146 Hamburg, Germany.}}
\date{}
\begin{document}

\maketitle

\begin{center}
 {\it  Dedicated to G\"unter Leugering on the occasion of his 65th birthday.}
\end{center} 

\begin{abstract}
We consider a semilinear elliptic  optimal control problem possibly subject to control and/or state constraints. 
Generalizing previous work in \cite{ali2016global} we provide a condition which guarantees that a solution of the necessary first order conditions is a global minimum.
A similiar result also holds at the discrete level where the corresponding condition can be evaluated explicitly. Our investigations  are motivated by G\"unter Leugering, who raised the question whether the problem class
considered in \cite{ali2016global} can be extended to the nonlinearity $\phi(s)=s|s|$. We develop a corresponding analysis and present several numerical test examples demonstrating  its  usefulness in practice.
\end{abstract}

\section{Introduction and problem setting}
Let $\Omega \subset \mathbb{R}^d \, (d=2,3)$ be a bounded, convex polygonal/polyhedral domain, in which we consider the semilinear elliptic PDE 
\begin{align}
-\Delta y + \phi(\cdot,y) &=u  \quad \mbox{ in }  \Omega, \label{semilinear}\\
y &=0  \quad \mbox{ on } \partial\Omega.  \label{bc}
\end{align}
We assume that $\phi: \bar{\Omega} \times \mathbb{R} \rightarrow \mathbb{R}$ is a Carath\'eodory function with $\phi(x,0)=0$ a.e. in $\Omega$ and that
\begin{eqnarray} 
& & y \mapsto \phi(x,y) \mbox{ is of class } C^1  \mbox{ with } \phi_y(x,y)  \geq  0 \mbox{ for almost all } x \in \Omega; \label{nonneg} \\
& & \forall L \geq 0 \; \exists c_L \geq 0 \quad  \phi_{y}(x,y) \leq   c_L \quad \mbox{ for almost all } x \in \Omega \mbox{ and all } |y | \leq L.  \label{upbound}
\end{eqnarray}
Under the above conditions it can be shown that for every $u \in L^2(\Omega)$  the boundary value problem (\ref{semilinear}), (\ref{bc}) has a unique solution $y=: \mathcal G(u) \in H^2(\Omega) \cap H^1_0(\Omega)$.
Next, let us introduce $U_{ad}:=\{ v \in L^2(\Omega) : u_a \leq v(x) \leq u_b \mbox{ a.e. in } \Omega \}$, where $u_a,u_b \in \mathbb{R}$ with $- \infty \leq u_a \leq u_b \leq \infty$.
For given $y_0 \in L^2(\Omega), \, \alpha >0$ we then consider the optimal control problem
\begin{displaymath}
(\mathbb{P}) \quad
\begin{array}{rcl}
&& \min_{u \in U_{ad}} J(u):=\frac{1}{2} \Vert y-y_0 \Vert_{L^2(\Omega)}^2  + \frac{\alpha}{2} \Vert u \Vert_{L^2(\Omega)} ^2 \\
&& \mbox{subject to } y = \mathcal G(u) \mbox{ and } y_a(x) \leq y(x)\leq y_b(x) \mbox{ for all }  x \in K.
\end{array}
\end{displaymath}
Here, $y_a,y_b \in C^0(\bar \Omega)$  satisfy 
$y_a(x)<y_b(x)$ for all $x \in K$, where $K \subset \bar \Omega$ is compact and  either $K \subset \Omega$ or $K=\bar \Omega$. In
the latter case we suppose in addition that $y_a(x) <0< y_b(x),  x \in \partial \Omega$.
It is well--known that $(\mathbb{P})$ has a solution provided that a feasible point exists (compare \cite{casas1993boundary}). Under some constraint qualification, such as the linearized Slater condition,
a local solution $\bar u \in U_{ad}$ of $(\mathbb{P})$ then satisfies the following necessary first order conditions, see \cite[Theorem~5.2]{casas1993boundary}:  There exist $\bar p \in L^2(\Omega)$ and a regular Borel
measure $\bar \mu \in \mathcal M(K)$ such that 
\begin{align}
&\int_\Omega \nabla \bar y \cdot \nabla v + \phi(\cdot,\bar y)v \, dx = \int_\Omega \bar u v \, dx \quad \forall \, v \in H^1_0(\Omega),  \qquad y_a \leq \bar y \leq y_b \mbox{ in } K, \label{oc:state} \\
&\int_\Omega  \bar p (- \Delta v)  + \phi_y(\cdot,\bar y) \, \bar p \, v \, dx
 =\int_\Omega (\bar y -y_0) v \, dx   +   \int_K v \, d\bar\mu  \quad \forall \,v \in H^1_0(\Omega) \cap H^2(\Omega), \label{oc:adjoint} \\
&\int_\Omega (\bar p + \alpha \bar u)(u-\bar u) \, dx  \geq  0 \qquad \forall \, u \in U_{ad}, \label{oc:VI control}\\
&\int_K (z-\bar y)\, d\bar\mu  \leq  0 \qquad \forall \, z \in C^0(K), y_a \leq z \leq y_b \mbox{ in } K. \label{oc:VI measure}
\end{align}
In view of the nonlinearity of the state equation  problem $(\mathbb{P})$ is in general nonconvex  and hence there may be several solutions of the conditions (\ref{oc:state})--(\ref{oc:VI measure}). The problem we are
interested in is whether it is possible to establish sufficient conditions which guarantee that a solution of (\ref{oc:state})--(\ref{oc:VI measure}) is actually 
a global minimum of $(\mathbb{P})$. A first result in this direction was obtained by the authors in \cite{ali2016global} and holds for a class of nonlinearities which satisfy a certain growth condition:
\begin{Theorem} \label{main0} (\cite[Theorem 3.2]{ali2016global}) Let $d=2$; suppose that  $y \mapsto \phi(x,y)$ belongs to $C^2$ for almost all $x \in \Omega$ and that there exist $r>1$ and $M \geq 0$ such that
 \begin{equation} \label{assumption: 1a}
\displaystyle
|\phi_{yy}(x,y)| \leq M (\phi_y(x,y))^{\frac{1}{r}} \quad \mbox{ for almost all } x \in \Omega \mbox{ and all } y \in \mathbb{R}.
\end{equation}
Assume that $(\bar u, \bar y,\bar p, \bar \mu)$ solves  (\ref{oc:state})--(\ref{oc:VI measure}) and that
\begin{equation}  \label{etadef0}
\displaystyle
\Vert \bar p \Vert_{L^q} \leq \big(\frac{r-1}{2r-1} \big)^{\frac{1-r}{r}} M^{-1}  C^{\frac{2-2r}{r}}_q  \alpha^{\frac{\rho}{2}}    
q^{1/q} r^{1/r} \rho^{\rho/2} (2 - \rho)^{\frac{\rho}{2}-1},
\end{equation}
where $q:=\tfrac{3r-2}{r-1}, \, \rho:=\frac{r+q}{rq}$ and $C_q$ denotes the constant in (\ref{gnineq}) below. Then $\bar u$ is a global minimum for Problem~$(\mathbb{P})$. If the above inequality is strict, then
$\bar u$ is the unique global minimum.
\end{Theorem}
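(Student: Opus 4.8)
The plan is to show $J(u)\ge J(\bar u)$ for every \emph{feasible} $u$, i.e.\ every $u\in U_{ad}$ whose state $y:=\mathcal G(u)$ satisfies $y_a\le y\le y_b$ on $K$; this is exactly global optimality, and the strict variant will give uniqueness. Write $w:=y-\bar y$. First I would expand the objective: a direct computation gives
\[
J(u)-J(\bar u)=\tfrac12\|w\|_{L^2(\Omega)}^2+\tfrac{\alpha}{2}\|u-\bar u\|_{L^2(\Omega)}^2+\int_\Omega(\bar y-y_0)\,w\,dx+\alpha\int_\Omega\bar u\,(u-\bar u)\,dx.
\]
Since $y,\bar y\in H^2(\Omega)\cap H^1_0(\Omega)$ we have $w\in H^2(\Omega)\cap H^1_0(\Omega)$, so $w$ is admissible in the adjoint equation (\ref{oc:adjoint}); testing (\ref{oc:adjoint}) with $w$ and then using the difference of the two state equations to replace $-\Delta w$ by $(u-\bar u)-\big(\phi(\cdot,y)-\phi(\cdot,\bar y)\big)$, the cross term rewrites as
\[
\int_\Omega(\bar y-y_0)\,w\,dx=\int_\Omega\bar p\,(u-\bar u)\,dx-\int_\Omega\bar p\,R\,dx-\int_K w\,d\bar\mu,\qquad R:=\phi(\cdot,y)-\phi(\cdot,\bar y)-\phi_y(\cdot,\bar y)\,w.
\]

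Next I would discard two nonnegative terms. The control variational inequality (\ref{oc:VI control}) gives $\int_\Omega\bar p\,(u-\bar u)\,dx+\alpha\int_\Omega\bar u\,(u-\bar u)\,dx\ge 0$, and (\ref{oc:VI measure}) applied with the admissible competitor $z=y|_K$ gives $\int_K w\,d\bar\mu=\int_K(y-\bar y)\,d\bar\mu\le 0$. Hence
\[
J(u)-J(\bar u)\ \ge\ \tfrac12\|w\|_{L^2(\Omega)}^2+\tfrac{\alpha}{2}\|u-\bar u\|_{L^2(\Omega)}^2-\int_\Omega\bar p\,R\,dx,
\]
and everything reduces to absorbing the remainder term into the two quadratic terms.

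For this I would use the growth condition (\ref{assumption: 1a}). Since $\phi_y\ge 0$ and $|\phi_{yy}|\le M\phi_y^{1/r}$, the function $s\mapsto\phi_y(x,s)^{1-1/r}$ is Lipschitz in $s$ with constant $\tfrac{r-1}{r}M$ for a.e.\ $x$; feeding this into the integral form $R(x)=\int_{\bar y(x)}^{y(x)}\phi_{yy}(x,s)\,(y(x)-s)\,ds$, and using $\bar y\in H^2(\Omega)\hookrightarrow L^\infty(\Omega)$, one obtains a pointwise bound of the type $|R(x)|\le c\,|w(x)|^{\frac{2r-1}{r-1}}$ (with lower-order terms handled the same way), $c$ depending only on $M$, $r$ and $\|\bar y\|_{L^\infty}$. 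The exponent $q=\tfrac{3r-2}{r-1}$ is exactly the one for which H\"older's inequality yields $\int_\Omega|\bar p|\,|w|^{(2r-1)/(r-1)}\,dx\le\|\bar p\|_{L^q}\,\|w\|_{L^q}^{(2r-1)/(r-1)}$. I would then bound $\|w\|_{L^q}$ by the Gagliardo--Nirenberg inequality (\ref{gnineq}) in terms of $\|\nabla w\|_{L^2}$ and $\|w\|_{L^2}$ (this brings in $\rho$), combine with the a priori estimate $\|\nabla w\|_{L^2}^2\le\int_\Omega(u-\bar u)\,w\,dx$ obtained by testing the difference of the state equations with $w$ and using $\phi_y\ge 0$ (plus Poincar\'e), and finish with Young's inequality, optimizing the free parameter. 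This produces
\[
\int_\Omega\bar p\,R\,dx\ \le\ \tfrac12\|w\|_{L^2(\Omega)}^2+\tfrac{\alpha}{2}\|u-\bar u\|_{L^2(\Omega)}^2
\]
precisely under hypothesis (\ref{etadef0}), whence $J(u)\ge J(\bar u)$. When (\ref{etadef0}) is strict, the same chain retains a strictly positive multiple of $\|w\|_{L^2}^2+\|u-\bar u\|_{L^2}^2$ on the right, so any other global minimizer must satisfy $u=\bar u$, which gives uniqueness.

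The hard part will be this last step: extracting the right pointwise remainder bound from (\ref{assumption: 1a}) --- in particular identifying the exponent $\tfrac{2r-1}{r-1}$ --- and then running the H\"older / Gagliardo--Nirenberg / Young chain with sharp constants so that the threshold in (\ref{etadef0}) comes out with the stated explicit dependence on $q$, $\rho$, $C_q$ and the numerical factors. Everything preceding it is routine manipulation of the first-order system.
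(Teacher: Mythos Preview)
Your reduction to
\[
J(u)-J(\bar u)\ \ge\ \tfrac12\|w\|_{L^2}^2+\tfrac{\alpha}{2}\|u-\bar u\|_{L^2}^2-\int_\Omega\bar p\,R\,dx
\]
is correct and coincides with the paper's argument. The gap is in how you bound the remainder. Your proposed pointwise estimate $|R(x)|\le c\,|w(x)|^{(2r-1)/(r-1)}+\text{(lower order)}$ necessarily carries a constant depending on $\|\bar y\|_{L^\infty}$: already for $\phi(y)=y^3$ ($r=2$) one has $R=w^3+3\bar y\,w^2$, and the lower-order piece cannot be made $\bar y$-free. But the threshold in (\ref{etadef0}) involves only $M,r,\alpha,C_q$ and is \emph{independent of $\bar y$}. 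Hence your H\"older/Gagliardo--Nirenberg/Young chain, even executed with sharp constants, structurally cannot produce the stated bound; at best it yields a weaker, $\bar y$-dependent criterion, which is a different theorem.

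The paper's route avoids this by \emph{not} reducing $R$ to a pure power of $|w|$. From the Lipschitz property of $\phi_y^{1-1/r}$ that you correctly identified one obtains instead the pointwise inequality
\[
|R|\ \le\ M\Bigl(\tfrac{r-1}{2r-1}\Bigr)^{1-1/r}\,|w|^{2-2/r}\,\bigl((\phi(\cdot,y)-\phi(\cdot,\bar y))\,w\bigr)^{1/r},
\]
valid for all $y,\bar y$ with no dependence on their size. This forces a \emph{three}-exponent H\"older split (with $q$, $r$, and $s=q/(q(1-1/r)-1)$), producing a Gagliardo--Nirenberg factor $\|w\|_{L^q}^{2-2/r}$ together with the monotonicity integral $\bigl(\int_\Omega(\phi(\cdot,y)-\phi(\cdot,\bar y))\,w\,dx\bigr)^{1/r}$. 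The latter is controlled by \emph{keeping} (rather than dropping, as you do) the monotone term in the energy identity,
\[
\|\nabla w\|_{L^2}^2+\int_\Omega(\phi(\cdot,y)-\phi(\cdot,\bar y))\,w\,dx\ \le\ \|w\|_{L^2}\,\|u-\bar u\|_{L^2},
\]
and then merging the gradient factor and the monotonicity factor via the sharp elementary inequality $a^\lambda b^\mu\le \lambda^\lambda\mu^\mu(\lambda+\mu)^{-\lambda-\mu}(a+b)^{\lambda+\mu}$ (applied twice, in place of Young with a free parameter). This is precisely what makes the final constant $\bar y$-free and yields (\ref{etadef0}) with the explicit numerical factors.
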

Assumption (\ref{assumption: 1a}) is satisfied for $\phi_q(y):= |y|^{q-2}y$ provided that $q>3$ if we choose $r=\frac{q-2}{q-3}$. G\"unter Leugering recently raised the question whether our theory can be
extended to include the case $q=3$. The corresponding nonlinearity  $\phi_3(y)=|y|y$ appears for example in the mathematical modeling of gas flow through pipes with PDEs \cite[(5.1)]{hante}, so that an extension of Theorem \ref{main0} 
to this case could  be helpful in understanding the optimal control of pipe networks.  As $\phi_3$  is no longer $C^2$ it does not fit directly into the theory above. However it turns out that instead the analysis can be 
built on the fact that $\phi_{3,y}$ satisfies a global Lipschitz condition. \\
The purpose of this paper is to generalize Theorem \ref{main0} in several directions. To begin, we shall replace (\ref{assumption: 1a}) by a condition that can be formulated for $C^1$--nonlinearities $\phi$ and is satisfied
by the functions $\phi_q$ for every $q \geq 3$ thus including the case suggested by G\"unter Leugering, see (\ref{assumption: 1}). A second generalization concerns the choice of the norm $\Vert \bar p \Vert_{L^q}$ in condition (\ref{etadef0}). 
Even though the integration index $q=\frac{3r-2}{r-1}$  is quite natural (solve $r=\frac{q-2}{q-3}$ for $q$), it is
nevertheless possible to formulate a corresponding result not just for one index but for $q$ belonging to  a suitable interval, see (\ref{qcond}), thus giving additional flexibility in its application. 
Our arguments are natural extensions of the analysis presented in \cite{ali2016global}
and will also cover the case $d=3$ left out in Theorem \ref{main0}. \\
There is a lot of literature available considering the problem $(\mathbb{P})$. For a broad overview, we refer the reader to the references of the respective citations. In \cite{casas1993boundary} this problem is studied for boundary controls. The regularity of optimal controls  of $(\mathbb{P})$ and their associated multipliers  is  investigated in \cite{casas2010recent} and  \cite{casas2014new}. Sufficient second order conditions are discussed in e.g. \cite{casas2002second,casas2008necessary,casas2008sufficient} when the set $K$ contains  finitely/infinitely many points. For the role of those conditions in PDE constrained optimization see e.g. \cite{casas2015second}. 

The finite element discretization of problem $(\mathbb{P})$ in rather general settings is studied in \cite{AradaCasasTroeltzsch,casas2002uniform,hinze2012stability}. Convergence rates for sets $K$ containing only finitely many points are  established in  \cite{merino2010error} for finite dimensional controls, and in \cite{casas2002error} for control functions. Only in \cite{neitzel2015finite,ADH2018} an error  analysis is provided for general pointwise state constraints in $K$. Error  analysis for linear-quadratic control problems  can be found in e.g. \cite{casas2014new}, \cite{deckelnick2007convergence,deckelnick2007finite} and \cite{meyer2008error}. Improved error estimates for the state in the case of weakly active state constraints are provided in \cite{neitzel2016wollner}. A detailed discussion of discretization concepts and error analysis in PDE-constrained control problems can be found in \cite{hinze2012discretization,hinze2010discrete} and \cite[Chapter~3]{pinnau2008optimization}.

The organization of the paper is as follows: in \S~\ref{sec:control problem} we shall develop the optimality conditions outlined above. In addition to the criteria based on an $L^q$--norm of $\bar p$ we shall also include
a result that uses a sign of $\bar p$. The variational discretization of $(\mathbb{P})$ is considered in \S~\ref{sec:variational} and is based on a finite element approximation of  (\ref{semilinear}), (\ref{bc})
that uses numerical integration for the nonlinear term. We obtain corresponding optimality criteria for discrete stationary points and apply these conditions in  a series of numerical tests in \S~\ref{numerics}
including the nonlinearity $\phi(y)= y |y|$.

\setcounter{equation}{0}

\section{Optimality conditions for $(\mathbb{P})$}

\label{sec:control problem}

In what follows we assume that $(\bar u, \bar y,\bar p, \bar \mu)$ is a solution of  (\ref{oc:state})--(\ref{oc:VI measure}).
Let $u \in U_{ad}$ be a feasible control,  $y=\mathcal{G}(u)$ the associated state such that $y_a \leq y \leq y_b$ in $K$. A straightforward calculation shows that
\begin{equation} \label{eq1}
J(u)-J(\bar u) = \frac{1}{2} \| y-\bar y \|_{L^2(\Omega)}^2 + \dfrac{\alpha}{2} \| u -\bar u \|_{L^2(\Omega)}^2  + \alpha \int_\Omega \bar u (u-\bar u) \, dx
+ \int_\Omega (\bar y-y_0)(y-\bar y) \, dx.
\end{equation}
Combining  \eqref{oc:adjoint} for $v:= y-\bar y$ with (\ref{oc:VI measure}) and (\ref{semilinear}) we deduce that
\begin{eqnarray*}
\lefteqn{ \hspace{-1.5cm} \int_\Omega (\bar y-y_0)(y-\bar y) \, dx = - \int_{\Omega} \bar p \, \Delta(y - \bar y) \, dx + \int_{\Omega} \phi_y(\cdot,\bar y) \, \bar p \, (y - \bar y) \, dx 
 - \int_K ( y - \bar y) d \bar \mu } \\
& \geq  & \int_{\Omega} (u - \bar u ) \bar p \, dx - \int_{\Omega} \bigl( \phi(\cdot,y) - \phi(\cdot,\bar y) - \phi_y(\cdot,\bar y)(y - \bar y) \bigr) \bar p \, dx.
\end{eqnarray*}
Inserting this relation into (\ref{eq1}) and recalling (\ref{oc:VI control}) we finally obtain
\begin{equation} \label{eq2}
J(u)-J(\bar u)   \geq   \frac{1}{2} \| y-\bar y \|_{L^2(\Omega)}^2 + \dfrac{\alpha}{2} \| u -\bar u\|_{L^2(\Omega)} ^2  - R(u), 
\end{equation}
where  
\begin{equation} \label{remainder}
 R(u) = \int_{\Omega} \bigl( \phi(\cdot,y) - \phi(\cdot,\bar y) - \phi_y(\cdot,\bar y)(y - \bar y) \bigr) \bar p \, dx.
\end{equation}

\subsection{Conditions involving a sign of $\bar p$}

A natural first idea to deduce global optimality from (\ref{eq2}) consists in identifying situations in which $R(u) \leq 0$ for all $u \in U_{ad}$. We
have the following result:

\begin{Theorem} \label{sign}
Suppose that there exists an interval $I \subset \mathbb{R}$ such that $y \mapsto \phi(x,y)$ is convex (concave) on $I$ for almost all $x \in \Omega$.
Furthermore, assume that for every $u \in U_{ad}$ the solution $y=\mathcal G(u)$ with $y_a \leq y \leq y_b$ in $K$ satisfies $y(x) \in I$ for all $x \in \Omega$.
If $\bar p \leq 0 \; ( \bar p \geq 0)$ a.e. on $\Omega$, then $\bar u$ is the unique global minimum of $(\mathbb{P})$.
\end{Theorem}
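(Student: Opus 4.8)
The plan is to read everything off inequality \eqref{eq2}. Since $\alpha>0$, it suffices to establish $R(u)\le 0$ for every feasible $u$; then \eqref{eq2} gives
\[
J(u)-J(\bar u)\ \ge\ \tfrac12\|y-\bar y\|_{L^2(\Omega)}^2+\tfrac{\alpha}{2}\|u-\bar u\|_{L^2(\Omega)}^2\ \ge\ 0,
\]
so $\bar u$ is a global minimum, and equality forces $\|u-\bar u\|_{L^2(\Omega)}=0$, i.e. $u=\bar u$, which yields uniqueness.

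To bound $R(u)$ I would first note that the reference state $\bar y=\mathcal G(\bar u)$ is itself admissible in the required sense: $\bar u\in U_{ad}$ and $y_a\le\bar y\le y_b$ on $K$ by \eqref{oc:state}, so the standing hypothesis gives $\bar y(x)\in I$ for all $x\in\Omega$, and likewise $y(x)\in I$ for all $x\in\Omega$. Next I would invoke convexity: for almost every $x$ the function $s\mapsto\phi(x,s)$ is $C^1$ and convex on $I$, hence its graph lies above the tangent at $\bar y(x)$, giving
\[
\phi(x,y(x))-\phi(x,\bar y(x))-\phi_y(x,\bar y(x))\bigl(y(x)-\bar y(x)\bigr)\ \ge\ 0\qquad\text{for a.e. }x\in\Omega.
\]
Therefore the integrand in \eqref{remainder} is a nonnegative function times $\bar p$; since $\bar p\le 0$ a.e. on $\Omega$, it is $\le 0$, and $R(u)\le 0$ follows.

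The concave case I would treat symmetrically: there the remainder above is $\le 0$ a.e., and the assumption $\bar p\ge 0$ a.e. again produces $R(u)\le 0$. Since the bound on $R(u)$ holds for every feasible $u$ with its associated state $y$, the displayed estimate holds for all such $u$ and $\bar u$ is the unique global minimum of $(\mathbb P)$.

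I do not expect a substantive obstacle here; the only points needing (routine) attention are that all arguments of $\phi$ stay in $I$ — exactly what the hypothesis on admissible states provides, the reference state $\bar y$ included — and the integrability of the remainder in \eqref{remainder} against $\bar p$, which is already implicit in the derivation of \eqref{eq2} since $y,\bar y\in H^2(\Omega)\hookrightarrow C^0(\bar\Omega)$ for $d\le 3$, so that the remainder lies in $L^\infty(\Omega)$.
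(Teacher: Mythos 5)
Your proof is correct and follows essentially the same route as the paper: use convexity (concavity) of $s\mapsto\phi(x,s)$ on $I$ to see that the integrand in \eqref{remainder} has a sign, multiply by the signed $\bar p$ to get $R(u)\le 0$, and conclude from \eqref{eq2} that $J(u)>J(\bar u)$ for $u\neq\bar u$. Your extra remarks (that $\bar y$ itself lies in $I$ and that the remainder is integrable against $\bar p$) are routine checks the paper leaves implicit.
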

\begin{proof} Suppose that $y \mapsto \phi(x,y)$ is convex. Then our assumptions imply that
\begin{displaymath}
\phi(x,y(x)) - \phi(x,\bar y(x)) - \phi_y(x,\bar y(x))(y(x) - \bar y(x)) \geq 0 \quad \mbox{ for almost all }  x \in \Omega
\end{displaymath}
which yields that $R(u) \leq 0$ since $\bar p \leq 0$ a.e. in $\Omega$. Hence $J(u)>J(\bar u)$ for $u \neq \bar u$ by (\ref{eq2}).
\end{proof}

In general we cannot expect  the adjoint variable $\bar p$ to have a sign without additional conditions on the data of the problem. The following
result is similar in spirit to a sufficient condition involving a suitable bound on $y_0$ obtained in \cite[Theorem 5.4]{M76} and \cite[Section 5.2]{IK00} for the optimal control of the obstacle problem.

\begin{Lemma} \label{pleq0} 
Suppose that $K= \emptyset$ and that $u_a=0, u_b < \infty$. Let  $y_b \in H^2(\Omega)$ satisfy
\begin{displaymath}
- \Delta y_b + \phi(\cdot,y_b) \geq  u_b \; \mbox{ in } \Omega, \quad y_b \geq 0 \; \mbox{ on } \partial \Omega.
\end{displaymath}
Then $0 \leq \mathcal G(u) \leq y_b$ in $\bar{\Omega}$ for every $u \in U_{ad}$. Also, if $y_0 \geq y_b$ a.e. in $\Omega$, then $\bar p \leq 0$ in $\Omega$.
\end{Lemma}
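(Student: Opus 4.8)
The plan is to establish the two assertions in turn, both via the weak maximum principle / comparison arguments adapted to the monotone semilinear operator $y \mapsto -\Delta y + \phi(\cdot,y)$.

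\textbf{Step 1: the bounds $0 \leq \mathcal G(u) \leq y_b$.} Fix $u \in U_{ad}$, so $0 \leq u \leq u_b$ a.e., and set $y = \mathcal G(u) \in H^2(\Omega) \cap H^1_0(\Omega)$. For the lower bound, I would test the weak form of the state equation with $v := y^- = \max(-y,0) \in H^1_0(\Omega)$, using $\phi(x,0)=0$ and the monotonicity of $\phi(x,\cdot)$ from \mathref{nonneg}: the term $\int_\Omega \phi(\cdot,y)\,y^-\,dx = -\int_{\{y<0\}} \phi(\cdot,y)\,y\,dx \geq 0$ because $\phi(x,y)$ and $y$ have the same sign; combined with $\int_\Omega \nabla y \cdot \nabla y^-\,dx = -\int_\Omega |\nabla y^-|^2 dx$ and $\int_\Omega u\,y^-\,dx \geq 0$ (since $u \geq 0$, $y^- \geq 0$) one gets $\|\nabla y^-\|_{L^2}^2 \leq 0$, hence $y^- = 0$, i.e. $y \geq 0$. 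For the upper bound, set $w := y - y_b$. Subtracting the (weak/distributional) inequality satisfied by $y_b$ from the equation for $y$ gives $-\Delta w + \phi(\cdot,y) - \phi(\cdot,y_b) \leq u - u_b \leq 0$ in $\Omega$, and $w \leq 0$ on $\partial\Omega$ (since $y = 0 \leq y_b$ there). Testing with $w^+ = \max(w,0) \in H^1_0(\Omega)$ and using again that $(\phi(\cdot,y)-\phi(\cdot,y_b))\,w^+ \geq 0$ by monotonicity, one obtains $\|\nabla w^+\|_{L^2}^2 \leq 0$, hence $w \leq 0$, i.e. $y \leq y_b$. (I would be slightly careful about interpreting the differential inequality for $y_b$ weakly against nonnegative test functions, since $y_b \in H^2$ this is fine.)

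\textbf{Step 2: the sign of $\bar p$.} Since $K = \emptyset$ there is no measure term, so the adjoint equation \mathref{oc:adjoint} reads $-\Delta \bar p + \phi_y(\cdot,\bar y)\,\bar p = \bar y - y_0$ in $\Omega$, $\bar p = 0$ on $\partial\Omega$ (in the appropriate weak sense, with $\bar p \in H^1_0(\Omega)$ by elliptic regularity since the right-hand side is in $L^2$). By Step 1, $\bar y \leq y_b \leq y_0$ a.e., so the right-hand side $\bar y - y_0 \leq 0$ a.e. in $\Omega$. Now test the adjoint equation with $\bar p^+ = \max(\bar p,0) \in H^1_0(\Omega)$: the principal part gives $\|\nabla \bar p^+\|_{L^2}^2$, the zeroth-order term gives $\int_\Omega \phi_y(\cdot,\bar y)\,(\bar p^+)^2\,dx \geq 0$ because $\phi_y \geq 0$ by \mathref{nonneg}, and the right-hand side gives $\int_\Omega (\bar y - y_0)\,\bar p^+\,dx \leq 0$. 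Hence $\|\nabla \bar p^+\|_{L^2}^2 \leq 0$, so $\bar p^+ = 0$, i.e. $\bar p \leq 0$ in $\Omega$.

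\textbf{Main obstacle.} The arguments are all standard comparison/maximum-principle manipulations; the only genuine point requiring care is the precise functional-analytic setting in which the adjoint equation \mathref{oc:adjoint} is posed — it is stated in the "very weak" form with test functions in $H^1_0 \cap H^2$ — so before testing with $\bar p^+$ I would first record that, because $K=\emptyset$ forces the measure term to vanish and the data $\bar y - y_0 \in L^2(\Omega)$, the solution $\bar p$ in fact belongs to $H^2(\Omega) \cap H^1_0(\Omega)$ and satisfies the equation in the standard weak ($H^1_0$) sense; after that, the test-function computation above is legitimate. Everything else is routine.
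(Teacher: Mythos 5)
Your proof is correct and follows essentially the same route as the paper: test the state equation with the negative part of $y$ to get $y\geq 0$, test the difference with $(y-y_b)^+$ using the monotonicity of $\phi(x,\cdot)$ to get $y\leq y_b$, and then, since $K=\emptyset$ removes the measure term (so $\bar p\in H^2(\Omega)\cap H^1_0(\Omega)$ and the adjoint equation holds in the usual weak sense), test with $\bar p^+$ using $\phi_y\geq 0$ and $\bar y-y_0\leq y_b-y_0\leq 0$. The only blemish is a harmless sign slip in Step~1: with your convention $y^-=\max(-y,0)$ one has $\int_\Omega \phi(\cdot,y)\,y^-\,dx=-\int_{\{y<0\}}\phi(\cdot,y)\,y\,dx\leq 0$ (precisely because $\phi(x,y)$ and $y$ have the same sign), and it is this inequality, together with $\int_\Omega u\,y^-\,dx\geq 0$, that yields $\Vert\nabla y^-\Vert_{L^2}^2\leq 0$.
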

\begin{proof} Let $u \in U_{ad}$ and set $y=\mathcal G(u)$.  If we test (\ref{oc:state}) with $v= y^-$ we have
\begin{displaymath}
\int_{\Omega} | \nabla y^- |^2 \, dx =- \int_{\Omega} \phi(\cdot,y^-) \, y^- \, dx +  \int_{\Omega} u \, y^- \, dx  \leq 0
\end{displaymath}
using (\ref{nonneg}), the fact that $\phi(\cdot,0)=0$ as well as $u \geq 0$. We infer that $y^- \equiv 0$   and hence $y \geq 0$ in $\bar{\Omega}$. Next, $y - y_b$ satisfies
\begin{displaymath}
-\Delta(y - y_b)  + [\phi(\cdot,y)- \phi(\cdot,y_b)] \leq   u - u_b \leq 0 \quad \mbox{ a.e. in } \Omega.
\end{displaymath}
Testing with $(y - y_b)^+$ then gives $y \leq y_b$ in $\bar{\Omega}$. Finally, since $K= \emptyset$,
the adjoint state satisfies
\begin{displaymath}
- \Delta \bar p + \phi_y(\cdot,\bar y) \bar p = \bar y - y_0 \leq y_b - y_0 \leq 0 \quad \mbox{ a.e. in } \Omega
\end{displaymath}
since $\bar y \leq y_b$ by what we have already shown. We infer that $\bar p \leq 0$ in a similar way as above.
\end{proof}

\begin{Example}
Let $a \in L^{\infty}(\Omega)$ with $a \geq 0$ a.e. in $\Omega$. Then the functions $\phi(x,y)=e^{a(x)y}-1$ and $\phi(x,y)=a(x) | y|^{q-2}y \; (q \geq 3)$
are convex on $\mathbb{R}$ and $[0,\infty)$ respectively. Hence if $K=\emptyset$ and  $u_a,u_b$ and $y_0$ are chosen as in Lemma \ref{pleq0}, then Theorem \ref{sign} and Lemma \ref{pleq0} imply that
a solution of the necessary first order conditions will be the unique global minimum of $(\mathbb{P})$.
\end{Example}

\subsection{Conditions involving a bound on $\Vert \bar p \Vert_{L^q}$}

As mentioned above it will in general not be possible to establish a sign on the adjoint variable $\bar p$, so that one is left with trying to
bound $| R(u)|$ in terms of $\frac{1}{2} \| y-\bar y \|_{L^2(\Omega)}^2 + \frac{\alpha}{2} \| u -\bar u\|_{L^2(\Omega)} ^2$. In what follows
we shall assume that there exists $\gamma \in [0,1)$ and $M \geq 0$ such that

\begin{equation} \label{assumption: 1}
\Big| \frac{\phi_y(x,y_2) - \phi_y(x,y_1)}{y_2-y_1} \Big| \leq M \, \Bigl( \frac{\phi(x,y_2) - \phi(x,y_1)}{y_2-y_1} \Bigr)^{\gamma} 
\end{equation}

for almost all $x \in \Omega$ and  for all $y_1,y_2 \in \mathbb{R}, y_1 \neq y_2$. Note that (\ref{assumption: 1}) holds with $\gamma=0$ if $y \mapsto \phi_y(x,y)$ is globally
Lipschitz uniformly in  $x \in \Omega$. Furthermore, it is not difficult to verify that (\ref{assumption: 1}) is satisfied with $\gamma=\frac{1}{r}$ provided that
(\ref{assumption: 1a}) holds.

\begin{Example}
Let $\phi(x,y)=a(x)|y|^{q-2}y$, where $q \geq 3$ and $a \in L^\infty(\Omega)$ with $a(x) \geq 0$ a.e. in $\Omega$. Then, $\phi$ satisfies
(\ref{assumption: 1}) with $\gamma=\tfrac{q-3}{q-2}$ and $M=(q-2)(q-1)^{\frac{1}{q-2}} \|a\|^{\frac{1}{q-2}}_{L^\infty(\Omega)}$. \\[2mm]
\end{Example}

In what follows we shall make use of the elementary inequality (see e.g. \cite[Lemma 7.1]{ali2016global})
\begin{equation} \label{element}
a^{\lambda} b^\mu \leq \frac{\lambda^{\lambda} \mu^{\mu}}{(\lambda+\mu)^{\lambda+\mu}} (a+b)^{\lambda+\mu}, \quad a, b \geq 0,  \lambda, \mu >0,
\end{equation}
as well as of the Gagliardo--Nirenberg interpolation inequality
\begin{equation}  \label{gnineq}
\Vert f \Vert_{L^q} \leq C_q \Vert f \Vert_{L^2}^{1-\theta} \Vert \nabla f \Vert_{L^2}^{\theta}
\end{equation}
where $\theta= d(\frac{1}{2} - \frac{1}{q})$ and $2 \leq q < \infty$ if $d=2$ and $2 \leq q \leq 6$ if $d=3$. Explicit values for the constant $C_q$ in
(\ref{gnineq}) can e.g. be found in \cite{nasibov1990} and \cite{veling2002lower}, see also \cite[Theorem 7.3]{ali2016global}. \\
Before we state our main result we mention that it is well--known that $\bar p \in W^{1,s}_0(\Omega)$ for all $s \in [1,\frac{d}{d-1})$. In particular we infer with the help of a standard embedding result that 
\begin{equation} \label{p1} 
\bar p \in L^q(\Omega) 
\left\{
\begin{array}{ll}
\mbox{ for every } 1 \leq q < \infty & \mbox{ if } d=2;  \\
\mbox{ for every } 1 \leq q < 3 & \mbox{ if } d=3.
\end{array}
\right.  
\end{equation}
Furthermore, we have that
\begin{equation}  \label{p2}
\bar p \in L^{\infty}(\Omega) \mbox{ if } K = \emptyset \mbox{ or } K=\bar \Omega \mbox{ with }y_a,y_b \in W^{2,\infty}(\Omega). 
\end{equation}
In order to see (\ref{p2}) we note that $\bar p \in H^2(\Omega) \hookrightarrow L^{\infty}(\Omega)$ by elliptic regularity theory if $K= \emptyset$. On the
other hand, if $K=\bar \Omega$ with $y_a,y_b \in W^{2,\infty}(\Omega)$ we may apply Theorem 3.1 and Section 4.2 in \cite{casas2014new} to obtain that $\bar p \in L^{\infty}(\Omega)$. \\

\begin{Theorem} \label{main1}
Assume that $\phi$ satisfies (\ref{assumption: 1}) and let $(\bar u,\bar y,\bar p,\bar \mu) \in U_{ad} \times (H^2(\Omega) \cap H^1_0(\Omega)) \times L^2(\Omega) \times \mathcal M(K)  $ be
a solution of (\ref{oc:state})--(\ref{oc:VI measure}).
Furthermore, choose $q>1$ such that
\begin{equation} \label{qcond}
\displaystyle \frac{1}{1-\gamma}<q< \infty \; \mbox{ if } d=2; \quad 
\frac{3}{2(1-\gamma)} \leq q <3 \; \mbox{ if } d=3
\end{equation}
and define for $t:= \frac{2q(1-\gamma)}{q(1-\gamma)-1}$ and $\rho:=\frac{d}{2q} + \gamma$ the quantity
\begin{equation}  \label{etadef}
\displaystyle
\eta(\alpha,q,d):= \Bigl(\frac{1-\gamma}{2-\gamma} \Bigr)^{\gamma-1}  M^{-1}  C_{t}^{2(\gamma-1)} \alpha^{\frac{\rho}{2}} (\frac{d}{2q})^{-\frac{d}{2q}} \gamma^{-\gamma} (2-\rho)^{\frac{\rho}{2}-1} \rho^{\frac{\rho}{2}},
\end{equation}
where $C_t$ is the constant in (\ref{gnineq}). If the inequality
\begin{equation}
\label{18strich}
\|\bar p \|_{L^q} \leq \eta(\alpha,q,d)
\end{equation}
is satisfied, then $\bar u$ is a global minimum for Problem~$(\mathbb{P})$. If the inequality (\ref{18strich}) is strict, then
$\bar u$ is the unique global minimum. The assertions  hold for $\frac{3}{2(1-\gamma)} \leq q<\infty$ and $d=3$ provided that $K = \emptyset$ or $K=\bar \Omega$ with $y_a,y_b \in W^{2,\infty}(\Omega)$.
\end{Theorem}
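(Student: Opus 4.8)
The starting point is inequality (\ref{eq2}), which reduces the whole assertion to an upper bound for the remainder $R(u)$ defined in (\ref{remainder}): it suffices to show that, under (\ref{18strich}),
\[
R(u)\ \le\ \frac{\|\bar p\|_{L^q}}{\eta(\alpha,q,d)}\Bigl(\tfrac12\|y-\bar y\|_{L^2(\Omega)}^2+\tfrac\alpha2\|u-\bar u\|_{L^2(\Omega)}^2\Bigr)\qquad\text{for every feasible }u,
\]
because then (\ref{eq2}) yields $J(u)-J(\bar u)\ge\bigl(1-\|\bar p\|_{L^q}/\eta(\alpha,q,d)\bigr)\bigl(\tfrac12\|y-\bar y\|_{L^2(\Omega)}^2+\tfrac\alpha2\|u-\bar u\|_{L^2(\Omega)}^2\bigr)\ge0$; if moreover (\ref{18strich}) is strict the prefactor is positive, and since $y=\bar y$ forces $u=\bar u$ by subtracting the two state equations, $J(u)>J(\bar u)$ whenever $u\ne\bar u$.

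The conceptual core is a pointwise estimate for the integrand of $R(u)$. For a.e.\ $x$ I would write the second--order remainder as $\int_{\bar y(x)}^{y(x)}\bigl(\phi_y(x,s)-\phi_y(x,\bar y(x))\bigr)\,ds$, bound the integrand by $M\,|s-\bar y(x)|^{1-\gamma}\,|\phi(x,s)-\phi(x,\bar y(x))|^{\gamma}$ via (\ref{assumption: 1}), and then use that $s\mapsto\phi(x,s)$ is nondecreasing (by (\ref{nonneg})) to replace $|\phi(x,s)-\phi(x,\bar y)|$ by $|\phi(x,y)-\phi(x,\bar y)|$ for $s$ between $\bar y(x)$ and $y(x)$; integration in $s$ gives
\[
\bigl|\phi(x,y)-\phi(x,\bar y)-\phi_y(x,\bar y)(y-\bar y)\bigr|\ \le\ \tfrac{M}{2-\gamma}\,|y(x)-\bar y(x)|^{2-\gamma}\,|\phi(x,y)-\phi(x,\bar y)|^{\gamma}\quad\text{a.e. in }\Omega.
\]
Inserting this into (\ref{remainder}), I would apply H\"older's inequality with exponents $\tfrac1\gamma$ and $\tfrac1{1-\gamma}$ to split off the factor $\bigl(\int_\Omega|\phi(\cdot,y)-\phi(\cdot,\bar y)|\,|y-\bar y|\,dx\bigr)^{\gamma}$ and, after one further H\"older step, a factor containing $\|\bar p\|_{L^q}$ and $\|y-\bar y\|_{L^t}$ with $t=\tfrac{2q(1-\gamma)}{q(1-\gamma)-1}$. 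The PDE input comes from testing the difference of the state equations $-\Delta(y-\bar y)+\phi(\cdot,y)-\phi(\cdot,\bar y)=u-\bar u$ with $y-\bar y$ and invoking (\ref{nonneg}), which delivers simultaneously $\|\nabla(y-\bar y)\|_{L^2}^2\le\|u-\bar u\|_{L^2}\|y-\bar y\|_{L^2}$ and $\int_\Omega\bigl(\phi(\cdot,y)-\phi(\cdot,\bar y)\bigr)(y-\bar y)\,dx\le\|u-\bar u\|_{L^2}\|y-\bar y\|_{L^2}$. Finally the Gagliardo--Nirenberg inequality (\ref{gnineq}) applied with index $t$ — admissible precisely because the restrictions (\ref{qcond}) on $q$ force $2\le t<\infty$ if $d=2$ and $2\le t\le6$ if $d=3$ — bounds $\|y-\bar y\|_{L^t}$ by $C_t\|y-\bar y\|_{L^2}^{1-\theta}\|\nabla(y-\bar y)\|_{L^2}^{\theta}$ with $\theta=\tfrac{d}{2q(1-\gamma)}$. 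Combining these ingredients, the auxiliary factors collapse and one is left with an estimate of the form $R(u)\le c\,\|\bar p\|_{L^q}\,\|u-\bar u\|_{L^2}^{\rho}\,\|y-\bar y\|_{L^2}^{2-\rho}$ with $\rho=\tfrac d{2q}+\gamma\in(0,2)$.

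It then remains to absorb the right--hand side into $\tfrac12\|y-\bar y\|_{L^2(\Omega)}^2+\tfrac\alpha2\|u-\bar u\|_{L^2(\Omega)}^2$ by means of the elementary inequality (\ref{element}): applied, with the weights $\tfrac\alpha2$ and $\tfrac12$ inserted, to the exponents $\lambda=\tfrac\rho2$, $\mu=1-\tfrac\rho2$, it bounds $\|u-\bar u\|_{L^2}^\rho\|y-\bar y\|_{L^2}^{2-\rho}$ by a constant depending only on $\alpha$ and $\rho$ times $\tfrac12\|y-\bar y\|_{L^2(\Omega)}^2+\tfrac\alpha2\|u-\bar u\|_{L^2(\Omega)}^2$. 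Carrying out the bookkeeping — and using (\ref{element}) also at the earlier junctures where products of powers occur, with parameters chosen to produce the cleanest constant — identifies the overall prefactor as $\|\bar p\|_{L^q}/\eta(\alpha,q,d)$ with $\eta$ as in (\ref{etadef}), which completes the proof. The final clause for $d=3$ with $\tfrac3{2(1-\gamma)}\le q<\infty$ only needs, in the chain above, that $\bar p\in L^q(\Omega)$, which holds by (\ref{p1}) when $q<3$ and by (\ref{p2}) in the exceptional cases $K=\emptyset$ or $K=\bar\Omega$ with $y_a,y_b\in W^{2,\infty}(\Omega)$. I expect the pointwise estimate to be the main obstacle: extracting from (\ref{assumption: 1}) — a condition on the difference quotient of $\phi_y$ that involves no second derivative — a usable bound on the second--order remainder is exactly where monotonicity of $\phi(x,\cdot)$ is indispensable, and it is this step that dictates the exponent $2-\gamma$, hence the interpolation index $t$ and the parameter $\rho$; a secondary, purely computational difficulty is to track the constants through the chain H\"older $\to$ energy estimate $\to$ Gagliardo--Nirenberg $\to$ (\ref{element}) so that they assemble into precisely $\eta(\alpha,q,d)$, which for $d=2$, $\gamma=\tfrac1r$ and $q=\tfrac{3r-2}{r-1}$ reduces to the constant in \Theoremref{main0}.
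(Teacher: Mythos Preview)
Your overall strategy---bound the remainder $R(u)$ pointwise, separate the factors by H\"older, control the $L^t$ norm via Gagliardo--Nirenberg, feed in the energy identity for $y-\bar y$, and close with (\ref{element})---is exactly the paper's, and the proof you outline does establish a threshold of the stated form. However, your bookkeeping will \emph{not} produce the constant $\eta(\alpha,q,d)$ in (\ref{etadef}) when $\gamma\in(0,1)$; two of your steps lose sharpness, so the claim that ``the overall prefactor is $\|\bar p\|_{L^q}/\eta(\alpha,q,d)$'' is incorrect as written.

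First, your pointwise estimate yields the constant $\tfrac{M}{2-\gamma}$, whereas the paper obtains the smaller constant $L_\gamma=M\bigl(\tfrac{1-\gamma}{2-\gamma}\bigr)^{1-\gamma}$, which is precisely the factor appearing (inverted) in (\ref{etadef}). The ratio $L_\gamma\big/\tfrac{M}{2-\gamma}=(1-\gamma)^{1-\gamma}(2-\gamma)^{\gamma}$ is strictly below $1$ on $(0,1)$. The paper reaches $L_\gamma$ by mollifying $\phi$ in the $y$-variable, checking via H\"older that the mollification satisfies $|\phi_\varepsilon''|\le M(\phi_\varepsilon')^{\gamma}$, invoking \cite[Lemma~7.2]{ali2016global} for $C^2$ functions, and letting $\varepsilon\to 0$. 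Your direct argument---freezing $|\phi(x,s)-\phi(x,\bar y)|$ at the endpoint by monotonicity and then integrating $|s-\bar y|^{1-\gamma}$---is more elementary but not sharp enough.

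Second, you bound $\|\nabla(y-\bar y)\|_{L^2}^2$ and $\int_\Omega(\phi(\cdot,y)-\phi(\cdot,\bar y))(y-\bar y)\,dx$ \emph{separately} by $\|y-\bar y\|_{L^2}\|u-\bar u\|_{L^2}$ and then multiply the resulting powers. The paper instead first applies (\ref{element}) with $\lambda=\tfrac{d}{2q}$, $\mu=\gamma$ to combine $\|\nabla(y-\bar y)\|_{L^2}^{d/q}\bigl(\int_\Omega(\phi(\cdot,y)-\phi(\cdot,\bar y))(y-\bar y)\,dx\bigr)^{\gamma}$ into a single power of their \emph{sum}, and only then uses the energy identity, which bounds that sum (not each summand) by $\|y-\bar y\|_{L^2}\|u-\bar u\|_{L^2}$. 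This order of operations produces the extra factor $(\tfrac{d}{2q})^{d/(2q)}\gamma^{\gamma}\rho^{-\rho}<1$ visible in (\ref{etadef}); your route gives the factor $1$ here.

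Both discrepancies vanish at $\gamma=0$, so for globally Lipschitz $\phi_y$ (in particular for $\phi(y)=y|y|$) your argument does recover the stated $\eta$; for $\gamma\in(0,1)$ it proves only a strictly more restrictive threshold.
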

\begin{proof} To begin, note that (\ref{p1}) and (\ref{p2}) imply that $\bar p \in L^q(\Omega)$ for the cases that we consider. Our starting point is again (\ref{eq2}) in
which we write the remainder term as
\begin{equation} \label{R_integral}
R(u)= \int\limits_\Omega \bar p(y-\bar y)\int\limits_0^1 [\phi_y(\cdot, \bar y+t(y-\bar y))-\phi_y(\cdot,\bar y)] dt \, dx. 
\end{equation}
We claim that for all $y_1,y_2 \in \mathbb{R}, y_1 \neq y_2$ we have
\begin{eqnarray}
\lefteqn{ \hspace{-1.5cm}
\Big| \int_0^1 [ \phi_y(\cdot,y_1+t(y_2-y_1))- \phi_y(\cdot,y_1)] dt \Big|}  \label{phiydif} \\
& \leq & L_\gamma | y_2 - y_1|^{1-2\gamma}  \bigl( (\phi(\cdot,y_2) - \phi(\cdot,y_1))(y_2-y_1) \bigr)^{\gamma}, \nonumber
\end{eqnarray}
where $L_\gamma= M \bigl( \frac{1-\gamma}{2-\gamma} \bigr)^{1-\gamma}$. To see this, let us suppress temporarily the dependence on $x$ and introduce
\begin{displaymath}
\phi_{\epsilon}(y):= \int_{\mathbb R} \zeta_{\epsilon}(z) \, \phi(y-z) \, dz, \quad y \in \mathbb{R},
\end{displaymath}
where $(\zeta_{\epsilon})_{0 < \epsilon <1} \subset C^{\infty}_0(\mathbb{R})$ is a sequence of mollifiers satisfying
\begin{displaymath}
\zeta_{\epsilon} \geq 0, \; \mbox{supp} \zeta_{\epsilon} \subset [-\epsilon,\epsilon], \mbox{ and } \int_{\mathbb{R}} \zeta_{\epsilon}(z) \, dz =1.
\end{displaymath}
Since $\phi_{\epsilon}'(y)= \int_{\mathbb{R}} \zeta_{\epsilon}(z) \phi'(y-z) \, dz$  we have that
\begin{displaymath}
\phi_{\epsilon}''(y)= \lim_{h \rightarrow 0} \int_{\mathbb{R}} \zeta_{\epsilon}(z) \frac{\phi'(y+h-z)-\phi'(y-z)}{h} \, dz
\end{displaymath}
so that we obtain with the help of  (\ref{assumption: 1})  and H\"older's inequality
\begin{eqnarray*}
| \phi_{\epsilon}''(y) | & \leq & M \int_{\mathbb{R}} \zeta_{\epsilon}(z) ( \phi'(y-z) )^{\gamma} \, dz = M \int_{\mathbb{R}} (\zeta_{\epsilon}(z))^{1-\gamma} \bigl( \zeta_{\epsilon}(z) \,  \phi'(y-z) \bigr)^{\gamma} \, dz \\
& \leq & M \Bigl( \int_{\mathbb{R}} \zeta_{\epsilon}(z) \phi'(y-z) \, dz \Bigr)^{\gamma} = M (\phi_{\epsilon}'(y))^{\gamma}.
\end{eqnarray*}
We may therefore apply Lemma 7.2 in \cite{ali2016global} for $\gamma \in (0,1)$ to deduce that
\begin{displaymath}
\Big| \int_0^1 [ \phi_{\epsilon}'(y_1+t(y_2-y_1))- \phi_{\epsilon}'(y_1)] dt \Big|  \leq L_\gamma | y_2 - y_1|  \Bigl( \int_0^1 \phi_{\epsilon}'(y_1 + t(y_2 - y_1)) \, dt \Bigr)^{\gamma},
\end{displaymath}
but the above estimate easily extends to the case $\gamma=0$. The bound (\ref{phiydif}) now follows by sending $\epsilon \rightarrow 0$. 
If we insert (\ref{phiydif}) into (\ref{R_integral}) we find that
\begin{eqnarray}
|  R(u)  | &\leq & L_\gamma \,  \int_\Omega |\bar p | \,  |y-\bar y|^{2-2\gamma}   \bigl( (\phi(\cdot,y)-\phi(\cdot,\bar y))(y-\bar y) \bigr)^{\gamma}  \, dx \label{intermediate}  \\
& \leq & L_\gamma \,   \|\bar p\|_{L^q}  \| y- \bar y\|^{2(1-\gamma)}_{L^{2s(1-\gamma)}} \Bigl( \int_{\Omega}  (\phi(\cdot,y)-\phi(\cdot,\bar y))(y-\bar y) dx \Bigr)^{\gamma}, \nonumber
\end{eqnarray}
where we have used H\"older's inequality with exponents $q, r=\frac{1}{\gamma}$ and $s=\frac{q}{q(1-\gamma)-1}$. Note that 
\begin{displaymath}
2s(1-\gamma) = \frac{2q(1-\gamma)}{q(1-\gamma)-1} = t \in
\left\{
\begin{array}{ll}
(2, \infty), & \mbox{ if } d=2; \\
(2,6], & \mbox{ if } d=3
\end{array}
\right.
\end{displaymath}
in view of our assumptions on $q$. We may therefore use  (\ref{gnineq}) in order to estimate $\Vert y - \bar y \Vert_{L^t}$ and obtain with 
\begin{displaymath}
\theta = d \bigl( \frac{1}{2} - \frac{1}{t} \bigr) =  \frac{d}{2q(1-\gamma)} \quad \mbox{ and hence } \quad 2(1-\gamma) \theta = \frac{d}{q}
\end{displaymath}
that 
\begin{eqnarray*}
\lefteqn{ | R(u) | }  \\ 
& \leq &  L_\gamma \, C_{t}^{2(1-\gamma)}  \|\bar p\|_{L^q}    \| y- \bar y\|^{2(1-\gamma)- \frac{d}{q}}_{L^2}  \| \nabla( y- \bar y) \|^{\frac{d}{q}}_{L^2} \Bigl( \int_{\Omega}  (\phi(\cdot,y)-\phi(\cdot,\bar y))(y-\bar y) dx \Bigr)^{\gamma}.
\end{eqnarray*}
Applying  (\ref{element}) 
with $\lambda=\frac{d}{2q}$ and $\mu=\gamma$ and recalling that $\rho=\frac{d}{2q}+\gamma$ we may continue 
\begin{eqnarray*}
| R(u) | & \leq &   L_\gamma \, C_{t}^{2(1-\gamma)}  \|\bar p\|_{L^q}  \| y- \bar y\|^{2(1-\gamma) -\frac{d}{q}}_{L^2} \\[2mm] 
& & \times \frac{(\frac{d}{2q})^{\frac{d}{2q}} \gamma^{\gamma}}{\rho^\rho}  \Bigl( \Vert \nabla( y - \bar y) \Vert_{L^2}^2 +
\int_{\Omega}  (\phi(\cdot,y)-\phi(\cdot,\bar y))(y-\bar y) dx \Bigr)^\rho.
\end{eqnarray*}
If we take the difference of the PDEs satisfied by $\bar y$ and $y$ and test it with $y-\bar y$ we easily deduce that
\begin{displaymath}  
\Vert \nabla(y - \bar y) \Vert_{L^2}^2 + \int_{\Omega} \bigl( \phi(\cdot,y)- \phi(\cdot,\bar y) \bigr) (y-\bar y) \, dx  \leq \Vert y - \bar y \Vert_{L^2} \Vert u - \bar u \Vert_{L^2},
\end{displaymath}
which yields
\begin{eqnarray*}
| R(u)| 
& \leq & L_\gamma \,  C_{t}^{2(1-\gamma)} \frac{(\frac{d}{2q})^{\frac{d}{2q}} \gamma^{\gamma}}{\rho^\rho} \|\bar p\|_{L^q}  \| y- \bar y\|^{2(1-\gamma) -\frac{d}{q}+ \rho }_{L^2} \Vert u - \bar u \Vert_{L^2}^\rho \\
& = & 2 L_\gamma \,  C_{t}^{2(1-\gamma)} \alpha^{-\frac{\rho}{2}} \frac{(\frac{d}{2q})^{\frac{d}{2q}} \gamma^{\gamma}}{\rho^\rho} \|\bar p\|_{L^q} \bigl( \frac{1}{2} \| y- \bar y\|_{L^2}^2 \bigr)^{1- \frac{\rho}{2} }
\bigl( \frac{\alpha}{2} \Vert u - \bar u \Vert^2_{L^2} \bigr)^{\frac{\rho}{2}}.
\end{eqnarray*}
Using once more (\ref{element}), this time with $\lambda=1-\frac{\rho}{2}, \mu=\frac{\rho}{2}$ we finally deduce that
\begin{eqnarray*}
| R(u) | & \leq & 2 L_\gamma \, C_{t}^{2(1-\gamma)} \alpha^{-\frac{\rho}{2}} \frac{(\frac{d}{2q})^{\frac{d}{2q}} \gamma^{\gamma}}{\rho^\rho}  \bigl(1-\frac{\rho}{2} \bigr)^{1-\frac{\rho}{2}} \bigl(\frac{\rho}{2} \bigr)^{
\frac{\rho}{2}} \|\bar p\|_{L^q} \bigl( \frac{1}{2} \| y- \bar y\|_{L^2}^2 + \frac{\alpha}{2} \Vert u - \bar u \Vert^2_{L^2} \bigr) \\
& = & L_\gamma \,  C_{t}^{2(1-\gamma)} \alpha^{-\frac{\rho}{2}} (\frac{d}{2q})^{\frac{d}{2q}} \gamma^{\gamma} (2-\rho)^{1-\frac{\rho}{2}} \rho^{-\frac{\rho}{2}} \|\bar p\|_{L^q} \bigl( \frac{1}{2} \| y- \bar y\|_{L^2}^2 + \frac{\alpha}{2} \Vert u - \bar u \Vert^2_{L^2} \bigr).
\end{eqnarray*}
If we use this estimate in (\ref{eq2}) and recall (\ref{etadef}) as well as $L_\gamma=M \bigl( \frac{1-\gamma}{2-\gamma} \bigr)^{1-\gamma}$  we infer that $J(u)-J(\bar u) \ge 0$ provided that \eqref{18strich} holds, so that $\bar u$ is a global solution of problem $(\mathbb{P})$. If the inequality in \eqref{18strich} is strict, then $\bar u$ is the unique global minimum of problem $(\mathbb{P})$.
\end{proof}

\begin{Remark}
Suppose that $d=2$ and that $\phi$ satisfies (\ref{assumption: 1a}) for some $r>1, M \geq 0$, so that (\ref{assumption: 1}) holds with $\gamma=\frac{1}{r}$. If we set $q:=\frac{3r-2}{r-1}$, then $q$ satisfies (\ref{qcond})
while $t=q$ and $\rho= \frac{1}{q}+\frac{1}{r}= \frac{r+q}{rq}$, so that Theorem \ref{main0} is a special case of Theorem \ref{main1}. 
\end{Remark}


\setcounter{equation}{0}

\section{Variational discretization}
\label{sec:variational}

In this section we consider the case $d=2$ and let
$\mathcal T_h$ be an admissible triangulation of  $\Omega \subset \mathbb{R}^2$. We 
introduce the following spaces of linear finite elements:
\begin{align*}
X_{h} &:= \{ v_h \in C^0(\bar\Omega) : v_{h|T} \mbox{ is a linear polynomial on each }  T \in \mathcal{T}_h\}, \\
X_{h0} &:= \{ v_h \in X_h : {v_h}_{ | \partial\Omega}=0   \}.
\end{align*}
The Lagrange interpolation operator $I_h$ is defined by
\begin{equation*}
I_h:C^0(\bar\Omega) \to X_h, \qquad I_h y:= \sum_{i=1}^n y(x_i) \phi_i,
\end{equation*}
where $x_1,\ldots,x_n$ denote the nodes in the triangulation  $\mathcal{T}_h$ and $\{\phi_1, \ldots, \phi_n\}$  is the set of basis functions of the space $X_h$ which satisfy $\phi_i(x_j)=\delta_{ij}$.
We discretize (\ref{semilinear}), (\ref{bc}) using numerical integration for the nonlinear part: for a given $u \in L^2(\Omega)$, find $y_h \in X_{h0}$ such that
\begin{equation}
\label{WPDE:semilinear state h}
\int_\Omega \nabla y_h   \cdot  \nabla v_h + I_h[\phi(\cdot,y_h) v_h] \, dx = \int_\Omega u v_h \,dx \quad \forall \, v_h \in X_{h0}.
\end{equation}
Using the monotonicity of $y \mapsto \phi(\cdot,y)$ and the Brouwer fixed-point theorem one can show that (\ref{WPDE:semilinear state h}) admits a unique solution  $y_h=:\mathcal G_h(u) \in X_{h0}$.
The variational discretization (see \cite{hinze2005variational}) of Problem~$(\mathbb{P})$ then reads:
\begin{displaymath}
(\mathbb{P}_h) \quad
\begin{array}{rcl}
&&  \min_{u \in U_{ad}} J_h(u):=\frac{1}{2} \Vert y_h-y_0 \Vert_{L^2(\Omega)}^2  + \dfrac{\alpha}{2} \Vert u \Vert_{L^2(\Omega)} ^2 \\
&& \mbox{ subject to } y_h = \mathcal G_h(u), \, y_a(x_j) \leq y_h(x_j) \leq y_b(x_j), \; x_j \in \mathcal N_h,
\end{array}
\end{displaymath}
where $\mathcal N_h := \lbrace x_j \, | \, x_j \mbox{ is a node of } T \in \mathcal T_h, \mbox{ such that } T \cap K \neq \emptyset \rbrace$. It can be shown that $(\mathbb{P}_h)$ has a  solution, provided that a feasible point exists. In practice, candidates for solutions are
calculated by solving the system of necessary first order conditions which reads: find $\bar u_h \in U_{ad}, \bar y_h \in X_{h0}, \bar{p}_h \in X_{h0}, \bar{\mu}_j \in \mathbb{R}, x_j \in \mathcal N_h$ such that
$y_a(x_j) \leq y_h(x_j) \leq y_b(x_j), x_j \in \mathcal N_h$ and
\begin{align}
&\int_{\Omega} \nabla \bar y_h \cdot \nabla v_h + I_h[\phi(\cdot,\bar y_h) v_h] \, dx  = \int_{\Omega} \bar u_h v_h \, dx  \quad \forall \, v_h \in X_{h0},   \label{oc:state h} \\
&\int_\Omega \nabla \bar p_h \cdot \nabla v_h + I_h[\phi_y(\cdot,\bar y_h)\bar p_h v_h] \, dx
 =\int_\Omega (\bar y_h -y_0) v_h \, dx  + \sum_{x_j \in \mathcal N_h} \bar \mu_j v_h(x_j)  \; \forall \, v_h \in X_{h0}, \label{oc:adjoint h}
 \\
& \int_\Omega (\bar p_h + \alpha \bar u_h)(u-\bar u_h) \, dx  \geq  0 \qquad \forall \, u \in U_{ad}, \label{oc:VI control h} \\
& \sum_{x_j \in \mathcal N_h} \bar \mu_j (y_j-\bar y_h(x_j))  \leq  0 \qquad \forall \, (y_j)_{x_j \in \mathcal N_h}, y_a(x_j) \leq y_j \leq y_b(x_j), x_j \in \mathcal N_h. \label{oc:VI measure h}
\end{align}

In order to formulate the analogue of Theorem \ref{main1} we introduce the following $h$--dependent norm on $X_h$:
\begin{displaymath}
\Vert v_h \Vert_{h,q}:= \bigl( \int_{\Omega} I_h[ | v_h |^q] dx \bigr)^{\frac{1}{q}}, \quad v_h \in X_h, \;  1 \leq q < \infty.
\end{displaymath}

\begin{Theorem}
\label{Thm:global minima h}
Suppose that $\phi$ and $q>1$ satisfy the conditions (\ref{assumption: 1}) and (\ref{qcond}) respectively and let
$\bar u_h \in U_{ad}$, $\bar y_h \in X_{h0}$, $\bar p_h \in X_{h0}$, $(\bar \mu_j)_{x_j \in \mathcal N_h}$ be a solution of (\ref{oc:state h})--(\ref{oc:VI measure h}).
If
\begin{equation}
\label{18strich h}
\|\bar p_h\|_{h,q} \leq (\frac{1}{4})^{1-\gamma-\frac{1}{q}} \, \eta(\alpha,q,2),
\end{equation}
then $\bar u_h$ is a global minimum for Problem~$(\mathbb{P}_h)$. If the inequality (\ref{18strich h}) is strict, then $\bar u_h$ is the unique global minimum.
\end{Theorem}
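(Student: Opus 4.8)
The plan is to repeat, line by line, the proof of Theorem~\ref{main1} in the finite--element setting; the only genuinely new point is the passage from the mesh--dependent norms back to the norms in which \eqref{gnineq} is formulated.

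\emph{Step 1: the discrete analogue of \eqref{eq2}.} Let $u\in U_{ad}$ be feasible for $(\mathbb P_h)$, $y_h=\mathcal G_h(u)$, and set $\delta_h:=y_h-\bar y_h\in X_{h0}$. As in \eqref{eq1},
\[
J_h(u)-J_h(\bar u_h)=\tfrac12\|\delta_h\|_{L^2}^2+\tfrac\alpha2\|u-\bar u_h\|_{L^2}^2+\alpha\!\int_\Omega\!\bar u_h(u-\bar u_h)\,dx+\int_\Omega(\bar y_h-y_0)\delta_h\,dx.
\]
Testing \eqref{oc:adjoint h} with $v_h=\delta_h$, discarding the measure term by \eqref{oc:VI measure h} with the admissible choice $y_j=y_h(x_j)$, eliminating $\int_\Omega\nabla\bar p_h\cdot\nabla\delta_h\,dx$ through the difference of \eqref{oc:state h} for $u$ and $\bar u_h$ tested with $\bar p_h$, and finally using \eqref{oc:VI control h}, one obtains exactly
\[
J_h(u)-J_h(\bar u_h)\ge\tfrac12\|\delta_h\|_{L^2}^2+\tfrac\alpha2\|u-\bar u_h\|_{L^2}^2-R_h(u),\qquad R_h(u)=\int_\Omega I_h\!\big[\big(\phi(\cdot,y_h)-\phi(\cdot,\bar y_h)-\phi_y(\cdot,\bar y_h)\delta_h\big)\bar p_h\big]dx.
\]

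\emph{Step 2: a nodewise bound on $R_h(u)$.} Writing $\int_\Omega I_h[f]\,dx=\sum_j m_j f(x_j)$ with $m_j>0$ the lumped--mass weights, we have $R_h(u)=\sum_j m_j\big(\phi(x_j,y_h(x_j))-\phi(x_j,\bar y_h(x_j))-\phi_y(x_j,\bar y_h(x_j))\delta_h(x_j)\big)\bar p_h(x_j)$. Applying the pointwise estimate \eqref{phiydif} with $y_1=\bar y_h(x_j)$, $y_2=y_h(x_j)$ and using that $\big(\phi(x_j,y_h(x_j))-\phi(x_j,\bar y_h(x_j))\big)\delta_h(x_j)\ge0$ by monotonicity of $\phi$, we get $|R_h(u)|\le L_\gamma\int_\Omega I_h[\,|\bar p_h|\,|\delta_h|^{2-2\gamma}\big((\phi(\cdot,y_h)-\phi(\cdot,\bar y_h))\delta_h\big)^\gamma\,]dx$. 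A discrete H\"older inequality for the functional $f\mapsto\sum_j m_j f(x_j)$ with exponents $q,\ \tfrac1\gamma,\ s=\tfrac q{q(1-\gamma)-1}$ (legitimate under \eqref{qcond}), together with the identity $(2-2\gamma)s=t$, then gives
\[
|R_h(u)|\le L_\gamma\,\|\bar p_h\|_{h,q}\,\|\delta_h\|_{h,t}^{2(1-\gamma)}\Big(\int_\Omega I_h[(\phi(\cdot,y_h)-\phi(\cdot,\bar y_h))\delta_h]\,dx\Big)^{\gamma}.
\]
Testing the difference of the discrete state equations with $\delta_h$ and using monotonicity once more yields $\|\nabla\delta_h\|_{L^2}^2+\int_\Omega I_h[(\phi(\cdot,y_h)-\phi(\cdot,\bar y_h))\delta_h]\,dx\le\|\delta_h\|_{L^2}\|u-\bar u_h\|_{L^2}$, exactly as in the continuous case.

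\emph{Step 3: converting $\|\delta_h\|_{h,t}$ and concluding.} This is where the factor $(\tfrac14)^{1-\gamma-1/q}$ enters. Combine the mass--lumping equivalence $\|v_h\|_{h,2}\le 2\|v_h\|_{L^2}$ for $v_h\in X_h$ (valid for $d=2$: the spectral bound between the local consistent and lumped mass matrices on a triangle) with the discrete Gagliardo--Nirenberg inequality $\|v_h\|_{h,t}\le C_t\|v_h\|_{h,2}^{1-\theta}\|\nabla v_h\|_{L^2}^\theta$, $\theta=1-\tfrac2t$, $C_t$ the constant in \eqref{gnineq}. This gives $\|\delta_h\|_{h,t}\le 2^{1-\theta}C_t\|\delta_h\|_{L^2}^{1-\theta}\|\nabla\delta_h\|_{L^2}^{\theta}$; since $(1-\theta)(1-\gamma)=1-\gamma-\tfrac1q$ and $\theta\cdot2(1-\gamma)=\tfrac2q$, raising to the power $2(1-\gamma)$ reproduces the $\|\delta_h\|_{L^2}$-- and $\|\nabla\delta_h\|_{L^2}$--exponents of the continuous proof at the cost of the factor $2^{2(1-\gamma-1/q)}=4^{1-\gamma-1/q}$. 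From here the estimate is word for word that of Theorem~\ref{main1}: apply \eqref{element} first with $\lambda=\tfrac1q,\ \mu=\gamma$ (so that $\|\nabla\delta_h\|_{L^2}^{2/q}\big(\int_\Omega I_h[\cdots]\big)^\gamma$ is controlled by $\big(\|\nabla\delta_h\|_{L^2}^2+\int_\Omega I_h[\cdots]\big)^{\rho}\le(\|\delta_h\|_{L^2}\|u-\bar u_h\|_{L^2})^{\rho}$ with $\rho=\tfrac1q+\gamma$), then with $\lambda=1-\tfrac\rho2,\ \mu=\tfrac\rho2$. The outcome is
\[
|R_h(u)|\le 4^{1-\gamma-1/q}\,\eta(\alpha,q,2)^{-1}\,\|\bar p_h\|_{h,q}\Big(\tfrac12\|\delta_h\|_{L^2}^2+\tfrac\alpha2\|u-\bar u_h\|_{L^2}^2\Big),
\]
so \eqref{18strich h} forces $J_h(u)-J_h(\bar u_h)\ge0$, with strict inequality, hence uniqueness, when \eqref{18strich h} is strict.

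\emph{Expected main obstacle.} Steps~1--2 are routine bookkeeping. The delicate point is Step~3, specifically the discrete Gagliardo--Nirenberg inequality with the \emph{continuous} constant $C_t$: one cannot simply bound $\|v_h\|_{h,t}$ by a fixed multiple of $\|v_h\|_{L^t}$ and invoke \eqref{gnineq}, since the best constant in such a comparison degenerates as $t\to\infty$. The mesh--dependent norm must therefore be carried on the left down to $q=2$, where the harmless equivalence $\|v_h\|_{h,2}\le2\|v_h\|_{L^2}$ is available; establishing the interpolation inequality $\|v_h\|_{h,t}\le C_t\|v_h\|_{h,2}^{1-\theta}\|\nabla v_h\|_{L^2}^\theta$ — e.g.\ by reducing to $v_h=I_h|v_h|\ge0$, whose $h$--norms and Dirichlet energy are dominated by those of $v_h$, and arguing directly on simplices — is the real content beyond Theorem~\ref{main1}.
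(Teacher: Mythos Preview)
Your Steps~1 and~2 are correct and match the paper verbatim, including the discrete H\"older step yielding
\[
|R_h(u)|\le L_\gamma\,\|\bar p_h\|_{h,q}\,\|\delta_h\|_{h,t}^{\,2(1-\gamma)}\Bigl(\int_\Omega I_h\bigl[(\phi(\cdot,y_h)-\phi(\cdot,\bar y_h))\delta_h\bigr]\,dx\Bigr)^{\gamma}.
\]

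The gap is in Step~3. You explicitly dismiss the direct route --- ``one cannot simply bound $\|v_h\|_{h,t}$ by a fixed multiple of $\|v_h\|_{L^t}$ \ldots\ since the best constant in such a comparison degenerates as $t\to\infty$'' --- and replace it by a discrete Gagliardo--Nirenberg inequality $\|v_h\|_{h,t}\le C_t\|v_h\|_{h,2}^{1-\theta}\|\nabla v_h\|_{L^2}^{\theta}$ with the \emph{continuous} constant $C_t$, which you do not prove (the sketch via $I_h|v_h|$ does not show how $C_t$ survives). Your premise is false: the paper establishes (Lemma in the Appendix, via Riesz--Thorin interpolation between the elementary bound $\|v_h\|_{h,2}\le 2\|v_h\|_{L^2}$ and the trivial $\|v_h\|_{h,\infty}\le\|v_h\|_{L^\infty}$) that
\[
\|v_h\|_{h,t}\le 4^{1/t}\,\|v_h\|_{L^t}\qquad\text{for all }2\le t<\infty,\ v_h\in X_h,
\]
so the comparison constant is uniformly bounded by $2$ and actually \emph{improves} towards $1$ as $t\to\infty$. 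Applying this to $\delta_h$ gives $\|\delta_h\|_{h,t}^{2(1-\gamma)}\le 4^{2(1-\gamma)/t}\|\delta_h\|_{L^t}^{2(1-\gamma)}=4^{1/s}\|\delta_h\|_{L^t}^{2(1-\gamma)}$ with $1/s=1-\gamma-1/q$, which is precisely the extra factor in \eqref{18strich h}; the continuous Gagliardo--Nirenberg inequality \eqref{gnineq} then applies verbatim to $\|\delta_h\|_{L^t}$ and the rest is identical to Theorem~\ref{main1}.

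In short: the ``real content beyond Theorem~\ref{main1}'' is not a discrete Gagliardo--Nirenberg inequality but the two--line norm equivalence $\|v_h\|_{h,t}\le 4^{1/t}\|v_h\|_{L^t}$, obtained by Riesz--Thorin. Your alternative route would also yield the correct factor \emph{if} the discrete GN with constant $C_t$ held, but you have not shown this, and it is unnecessary.
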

\begin{proof} Just as in the continuous case we obtain for $u \in U_{ad}$ with $y_h= \mathcal G_h(u)$
\begin{equation} \label{eq3}
J_h(u)-J_h(\bar u_h)   \geq   \frac{1}{2} \| y_h-\bar y_h \|_{L^2(\Omega)}^2 + \dfrac{\alpha}{2} \| u -\bar u_h\|_{L^2(\Omega)} ^2  - R_h(u), 
\end{equation}
where  
\begin{eqnarray} 
 R_h(u) & = & \int_{\Omega} I_h \left[\bigl( \phi(\cdot,y_h) - \phi(\cdot,\bar y_h) - \phi_y(\cdot,\bar y_h)(y_h - \bar y_h) \bigr) \bar p_h \right] \, dx \nonumber  \\
 & = &  \int_\Omega I_h \left[ \bar p_h (y_h -\bar y_h) \int\limits_0^1 (\phi_y(\cdot, \bar y_h+t(y_h-\bar y_h))-\phi_y(\cdot,\bar y_h))  dt \right] dx  \label{remainderh}.
\end{eqnarray}
If we use (\ref{phiydif}) then we obtain as above with the help of H\"older's inequality 
\begin{eqnarray*}
|  R_h(u)  | &\leq & M 2^{\gamma-1}  \int_\Omega I_h \left[ |\bar p_h | \,  |y_h -\bar y_h|^{2-2\gamma}   \bigl( (\phi(\cdot,y_h)-\phi(\cdot,\bar y_h))(y_h-\bar y_h) \bigr)^{\gamma} \right]  \, dx \\
& \leq & M 2^{\gamma-1}  \|\bar p_h\|_{h,q}  \| y_h- \bar y_h \|^{2(1-\gamma)}_{h,2s(1-\gamma)} \Bigl( \int_{\Omega}  I_h \left[(\phi(\cdot,y_h)-\phi(\cdot,\bar y_h))(y_h-\bar y_h) \right] dx \Bigr)^{\gamma},
\end{eqnarray*}
where $s=\frac{q}{q(1-\gamma)-1}$. Applying Lemma \ref{intest} in the Appendix we derive
\begin{displaymath}
| R_h(u) |  \leq  M 2^{\gamma-1} 4^{\frac{1}{s}}  \|\bar p_h\|_{h,q}  \| y_h- \bar y_h \|^{2(1-\gamma)}_{L^{2s(1-\gamma)}} \Bigl( \int_{\Omega}  I_h \left[(\phi(\cdot,y_h)-\phi(\cdot,\bar y_h))(y_h-\bar y_h) \right] dx \Bigr)^{\gamma},
\end{displaymath}
which is the analogue of (\ref{intermediate}). The rest of the proof now follows in the same way as in Theorem \ref{main1}, where we use  (\ref{WPDE:semilinear state h}) instead of the PDEs.
\end{proof}
We shall investigate condition (\ref{18strich h}) for different choices of $\phi$ and $q$ in the numerics section. From the numerical analysis point of view it is also possible to examine the convergence of
a sequence of solutions $(\bar u_h,\bar y_h,\bar p_h,(\bar \mu_j)_{x_j \in \mathcal N_h})_{0 < h < h_0}$ of (\ref{oc:state h})--(\ref{oc:VI measure h}) that satisfy (\ref{18strich h}) uniformly in $h$. Based on Theorem \ref{main0}, convergence in
$L^2(\Omega)$ of  $(\bar u_h)_{0<h< h_0}$ to a solution $\bar u$  of $(\mathbb{P})$ has been obtained in \cite[Theorem 4.2]{ali2016global}, while an error estimate is proved in \cite{ahmadoptimal,ADH2018}. We expect that
these results carry over to the generalized framework considered in this paper. In this context we also refer to \cite{neitzel2015finite} as a further contribution to the error analysis for optimal control of semilinear equations 
with pointwise bounds on the state. Contrary to our approach this work is based on second order sufficient optimality conditions for a local solution of the control problem and requires in particular a $C^2$--nonlinearity $\phi$.

\setcounter{equation}{0}

\section{Numerical experiments}\label{numerics}
In this section we conduct several numerical experiments related to Theorem~\ref{Thm:global minima h}. We consider $(\mathbb{P})$ with different choices for the nonlinearity $\phi$. For each choice we fix $\Omega:=(0,1)\times (0,1)$, while 
for the desired state $y_0$ we consider the following two scenarios:\\

\noindent
\textbf{A1:} (Reachable desired state) $y_0(x):=2\sin(2\pi x_1)\sin(2\pi x_2)$.

\noindent
\textbf{A2:} (Not reachable desired state) $y_0(x):=60+160(x_1(x_1-1)+x_2(x_2-1))$.\\

\noindent
For the control and state bounds we consider these three cases:\\

\noindent
\textbf{Case 1:} (Unconstrained problem) $u_b =-u_a= \infty$, $K=\emptyset$.

\noindent
\textbf{Case 2:} (Control constrained problem) $u_b =-u_a= 5$, $K=\emptyset$.

\noindent
\textbf{Case 3:} (State constrained problem) $u_b =-u_a= \infty$, $K=\bar \Omega, y_b \equiv-y_a \equiv 1$.\\

For $\alpha$ we report numerical results for the values $\alpha=10^i$, $i=-6,-5, \ldots, 3$.  The domain $\Omega$ is partitioned using a uniform triangulation with mesh size $h=2^{-5}\sqrt{2}$, and the discrete counterpart of the problem is as in Section~\ref{sec:variational}. The resulting discrete optimality system (\ref{oc:state h})--(\ref{oc:VI measure h}) is solved using the semismooth Newton method.

\begin{Example}
We consider $\phi(y):=y|y|$. Then, $\gamma=0$ with $M=2$. Taking $q=2$, the condition reads
\[
\|\bar p_h\|_{h,2} \leq \frac{1}{2} \, \eta(\alpha,2,2)
\] 
\end{Example}
with
\begin{align*}
C_4^{-2} \approx 2.381297723376159.
\end{align*}
The results are reported in Figure~\ref{F1}. We see that in the light of Theorem~\ref{Thm:global minima h}, the unique global solution of the considered control problem has been computed for all given values of $\alpha$, except for case~2 when $\alpha \leq 10^{-3}$. There, no conclusion can be derived. However, with the coefficient $a(x):=\frac{1}{8}$ we obtain a global unique solution for the whole considered parameter range, see Fig.~\ref{F4}.

\begin{Example}
	We consider $\phi(y):=y^3$. Then, $\gamma=0.5$ with $M=2\sqrt{3}$. Taking $q=3$, the condition reads
	\[
	\|\bar p_h\|_{L^3(\Omega)} \leq   \eta(\alpha,3,2)
	\] 
\end{Example}
with
\begin{align*}
C_6^{-1} \approx 1.616080082127768.
\end{align*}
The choice of $q=3$ is motivated by fact that among the possible choices of the  Gagliardo-Nirenberg constant the value of $C_6$ is among the smallest possible ones, see \cite[Figure 4]{ali2016global}. The integrals involving $\phi$, and the norm $\|\bar p_h\|_{L^3(\Omega)}$ are computed exactly. The results are reported in Figure~\ref{F2}. We for comparison also include the results for $q=4$ which correspond to the findings of \cite[Example 2]{ali2016global}. As one can see this choice in some situations delivers larger uniqueness intervalls for $\alpha$. Overall, uniqueness of the global solution can be deduced for certain ranges of the parameter $\alpha$, where it is more likely in the case of a reachable desired state $y_0$.

\begin{Example}
	We consider $\phi(y):=y^5$. Then, $\gamma=3/4$ with $M=4\times (5)^{1/4}$. Taking $q=6$, the condition reads
	\[
	\|\bar p_h\|_{L^6(\Omega)} \leq   \eta(\alpha,6,2)
	\] 
\end{Example}
with
\begin{align*}
C_6^{-1/2} \approx 1.271251384316953.
\end{align*}
The choice of $q=6$ is motivated as in the previous example. This then is the situation of \cite[Example 3]{ali2016global}. For comparison we also include the results obtained with quadrature based on the estimate \eqref{18strich h}. As one can see the differences in both approaches (exact integration versus quadrature) is negligible. The results are reported in Figure~\ref{F3}.

\begin{figure}[p]
        \centering
        \begin{subfigure}[h!]{0.5\textwidth}
                \includegraphics[trim = 40mm 80mm 30mm 70mm, clip, width=\textwidth]{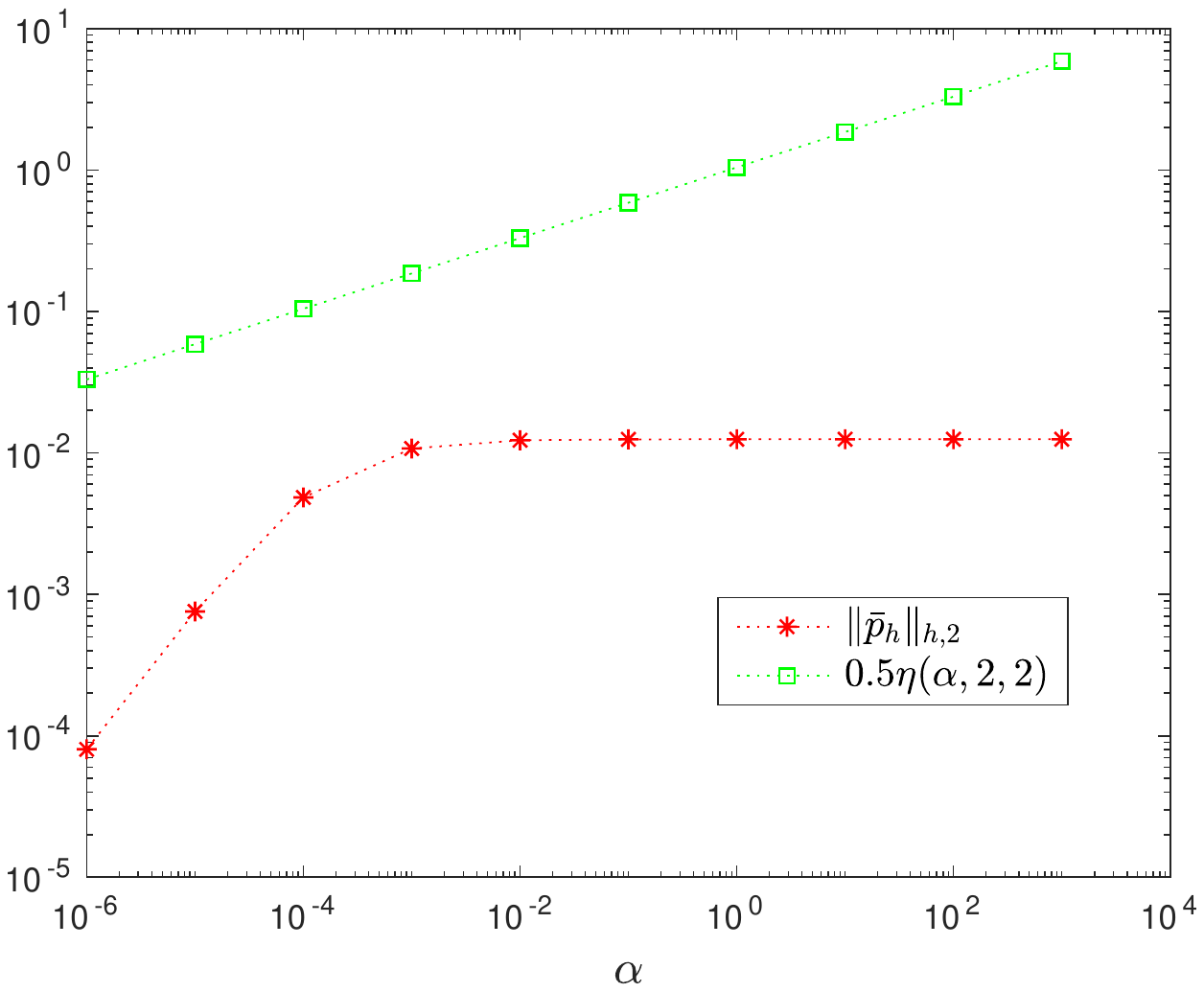}
                \caption{Case 1 with A1}
        \end{subfigure}\hfill%
         \begin{subfigure}[h!]{0.5\textwidth}
                \includegraphics[trim = 40mm 80mm 30mm 70mm, clip, width=\textwidth]{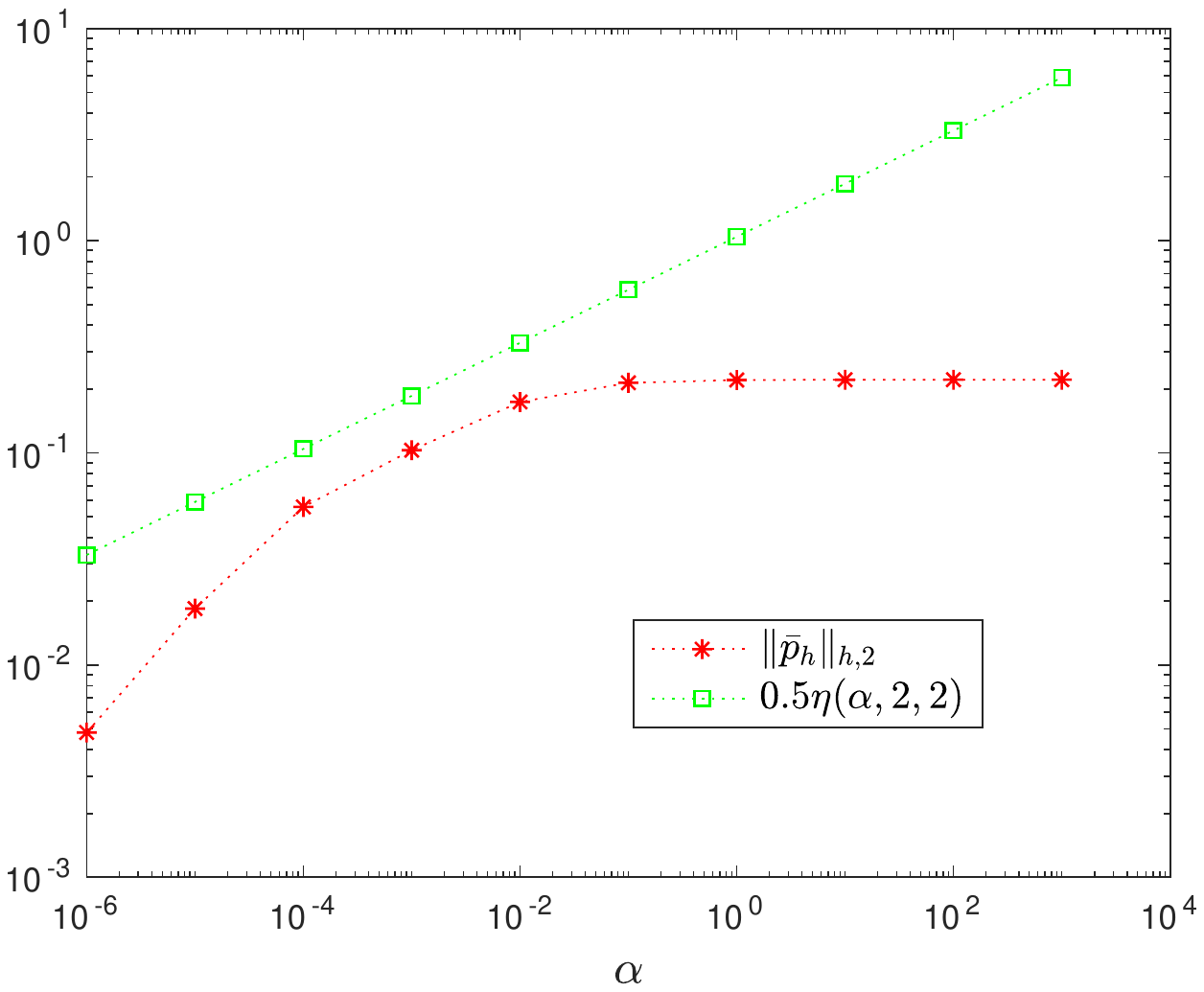}
                \caption{Case 1 with A2}
        \end{subfigure}\\%
        \begin{subfigure}[h!]{0.5\textwidth}
                \includegraphics[trim = 40mm 80mm 30mm 70mm, clip, width=\textwidth]{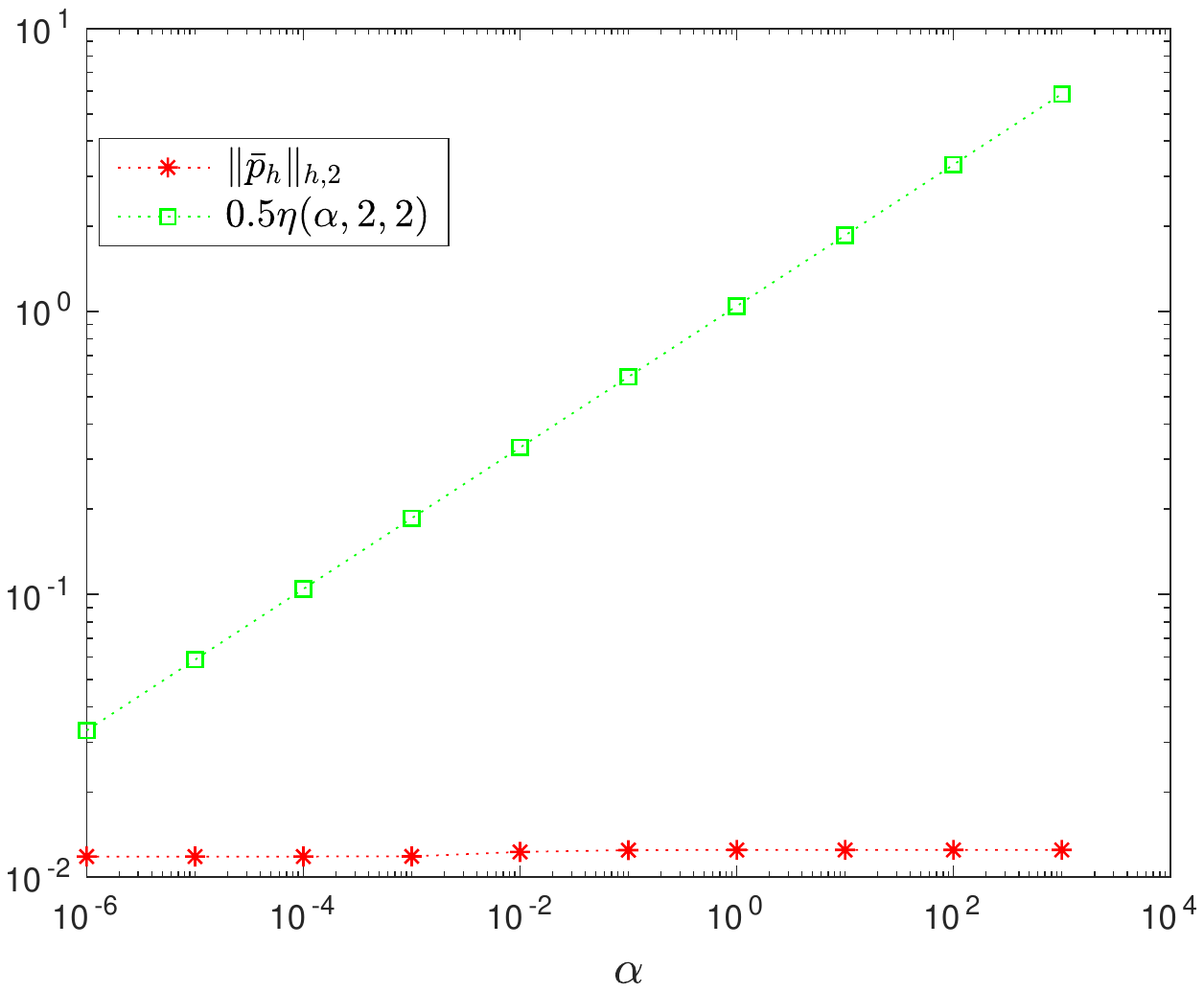}
                \caption{Case 2 with A1}
        \end{subfigure}\hfill%
        \begin{subfigure}[h!]{0.5\textwidth}
                \includegraphics[trim = 40mm 80mm 30mm 70mm, clip, width=\textwidth]{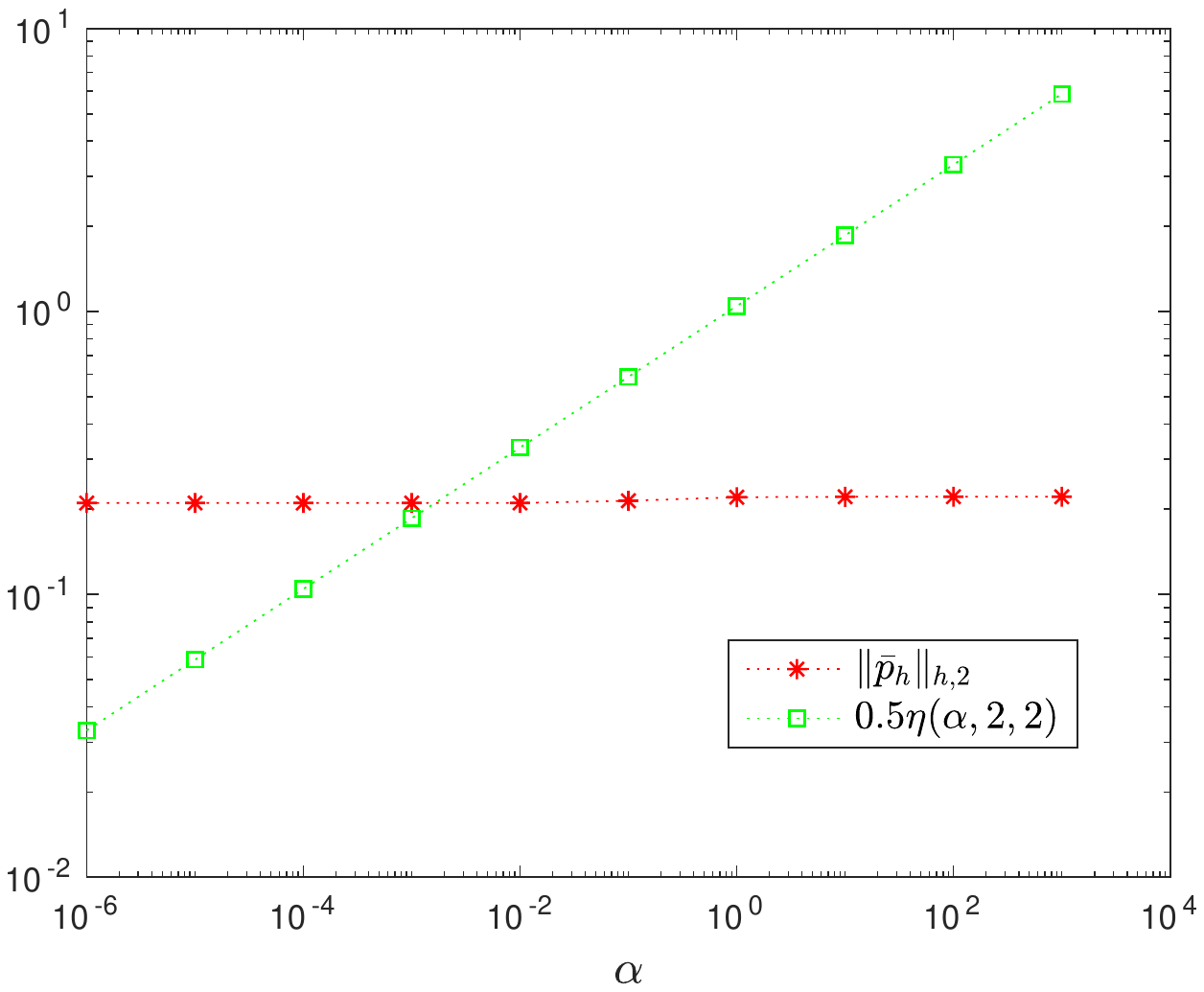}
                \caption{Case 2 with A2}
        \end{subfigure}\\%
        \begin{subfigure}[h!]{0.5\textwidth}
                \includegraphics[trim = 40mm 80mm 30mm 70mm, clip, width=\textwidth]{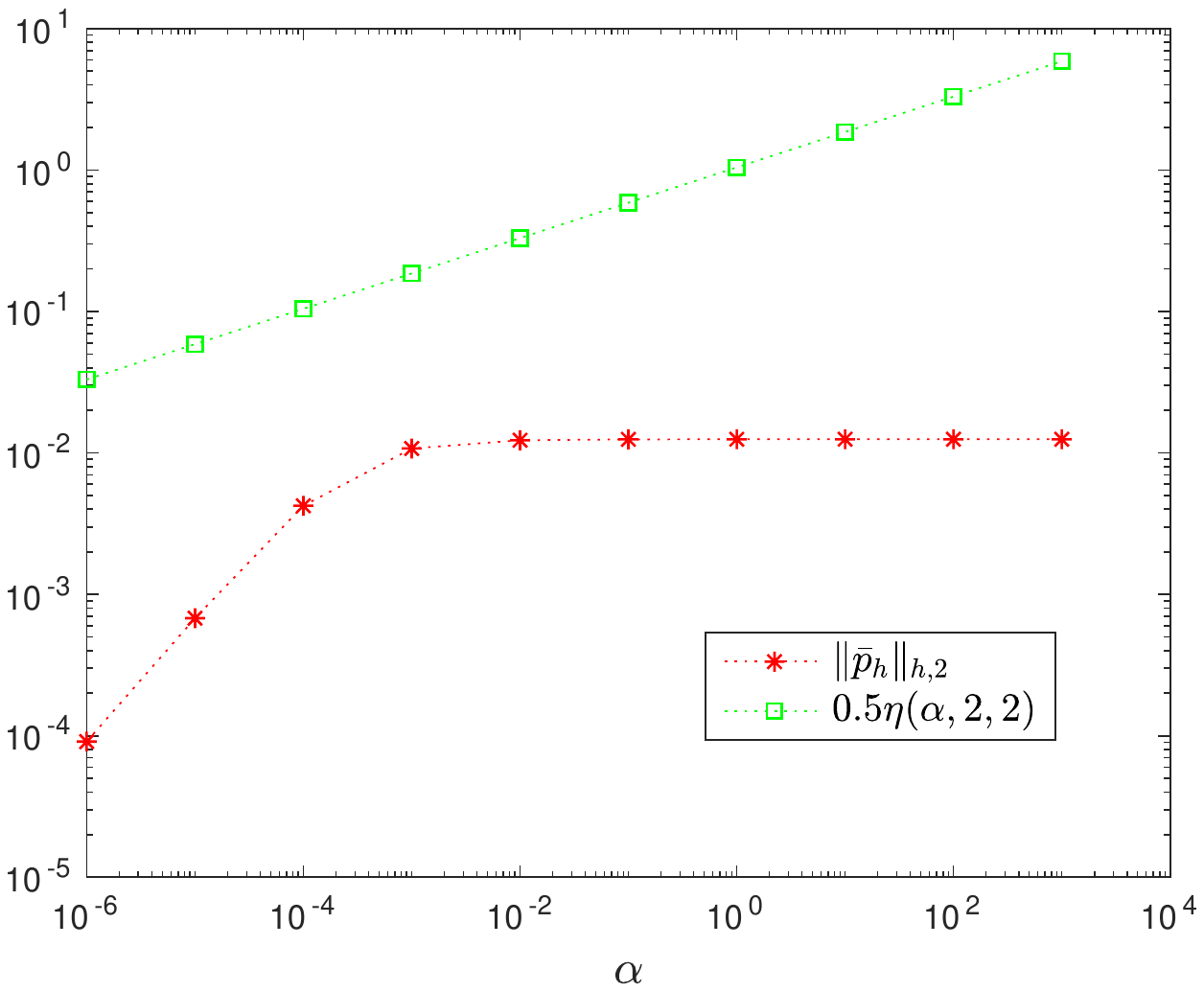}
                \caption{Case 3 with A1}
        \end{subfigure}\hfill%
        \begin{subfigure}[h!]{0.5\textwidth}
                \includegraphics[trim = 40mm 80mm 30mm 70mm, clip, width=\textwidth]{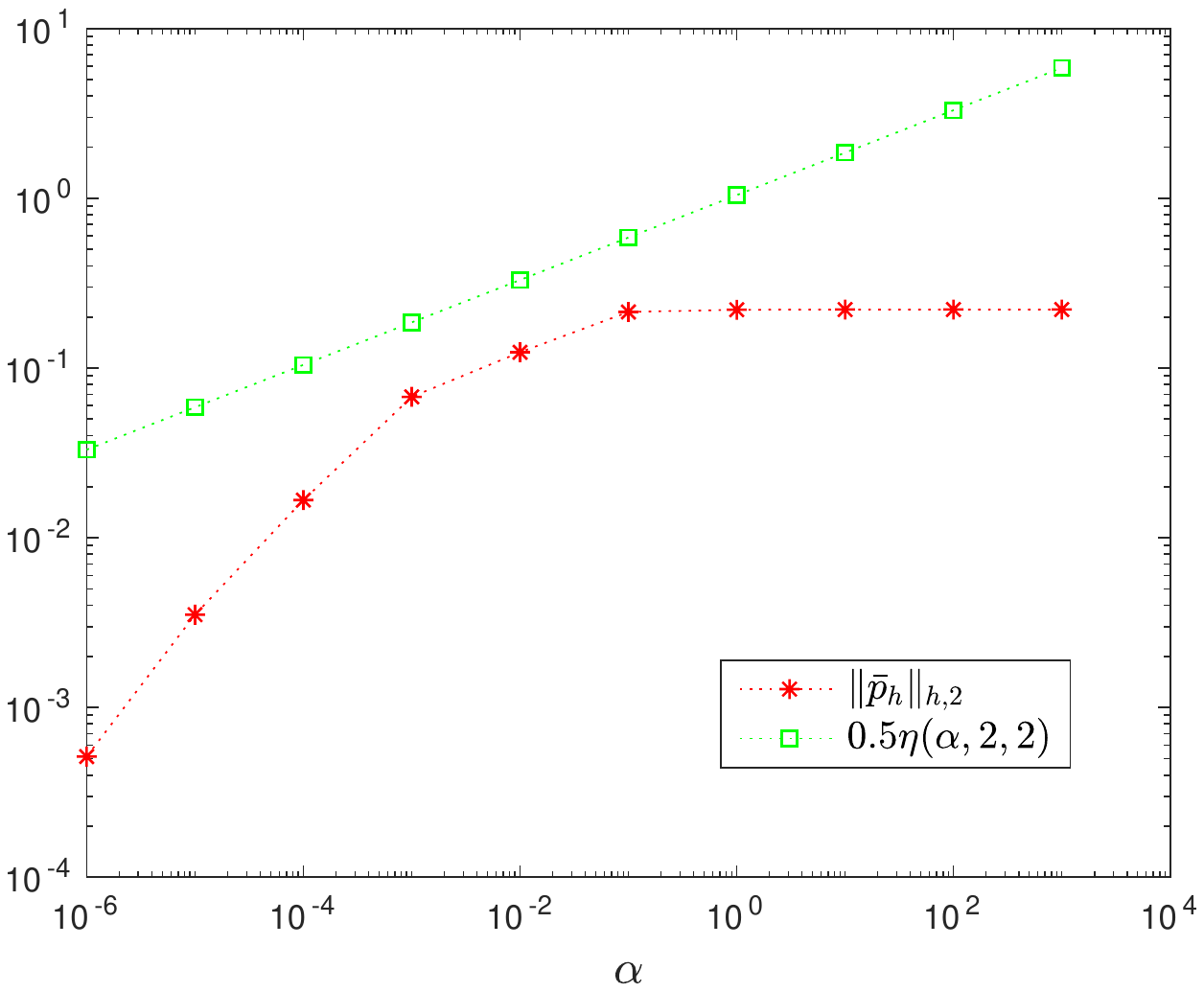}
                \caption{Case 3 with A2}
        \end{subfigure}%
        \caption{Results for $\phi(s)=s|s|$}
        \label{F1}
\end{figure}

\begin{figure}[htb]
        \centering
         \includegraphics[width=.4\textwidth]{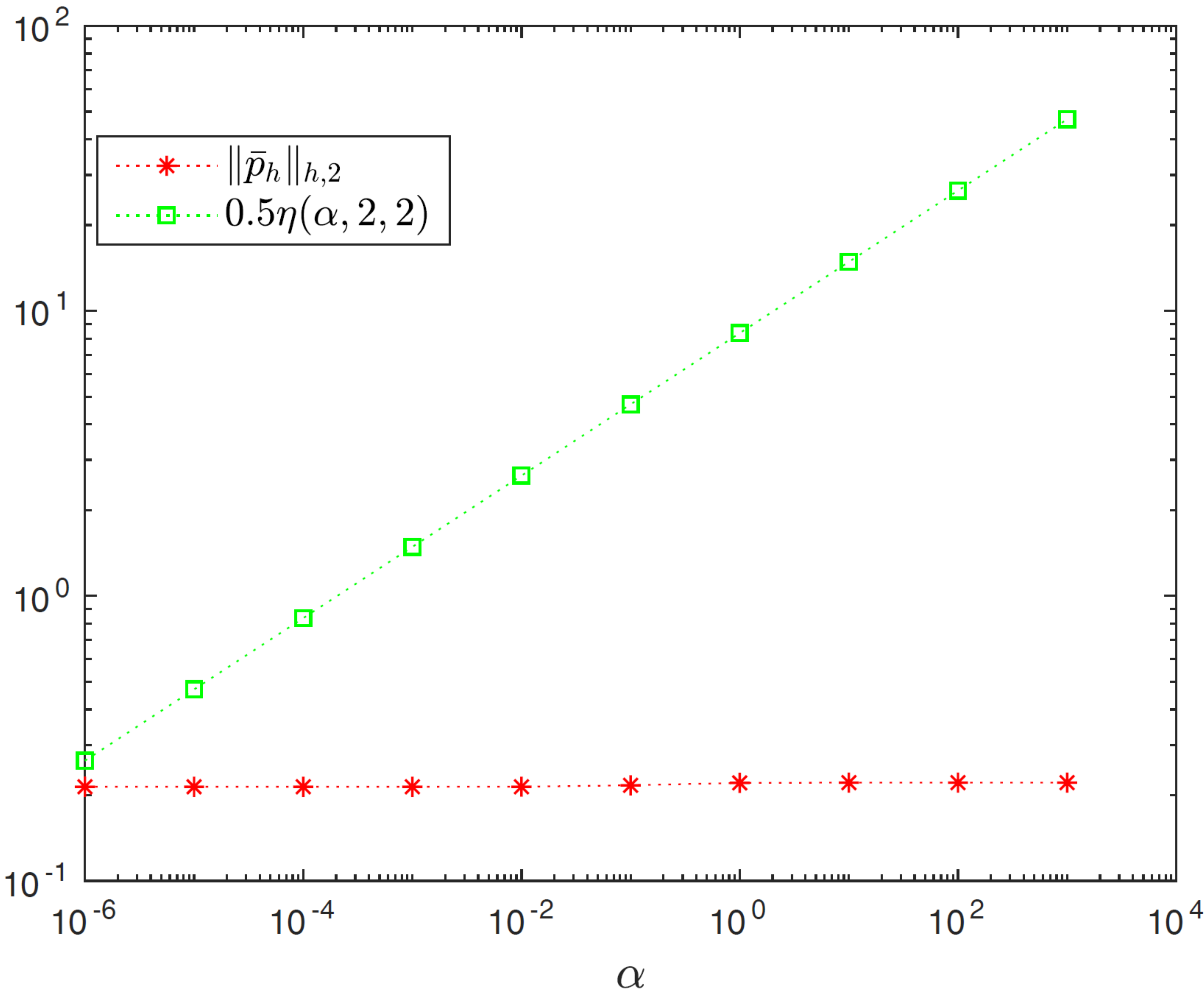}
        \caption{Case 2 with A2 for $\phi(s)=\frac{1}{8}s|s|$} \label{F4}
\end{figure}

\begin{figure}[p]
        \centering
        \begin{subfigure}[h!]{0.5\textwidth}
                \includegraphics[trim = 40mm 80mm 30mm 70mm, clip, width=\textwidth]{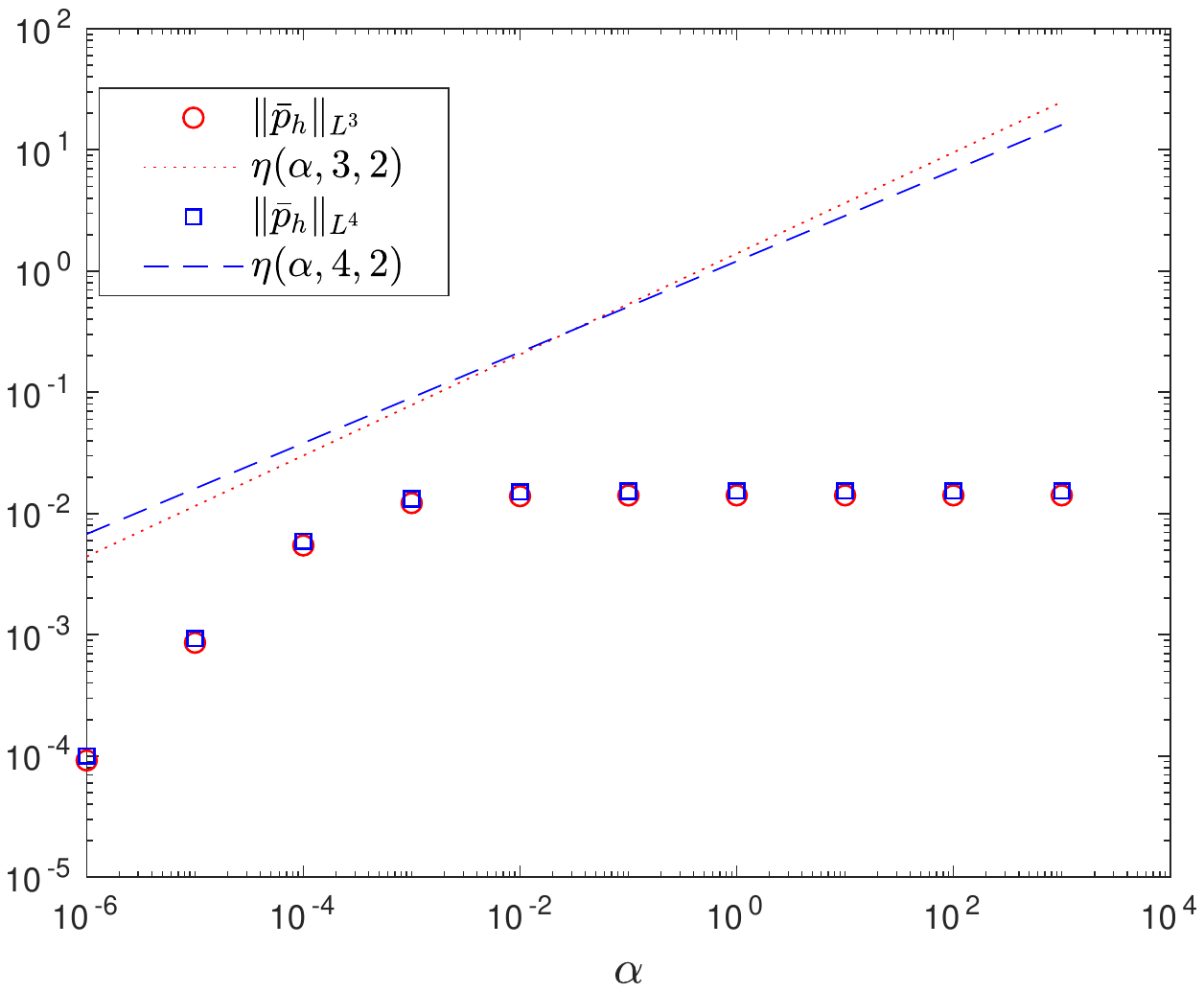}
                \caption{Case 1 with A1}
        \end{subfigure}\hfill%
         \begin{subfigure}[h!]{0.5\textwidth}
                \includegraphics[trim = 40mm 80mm 30mm 70mm, clip, width=\textwidth]{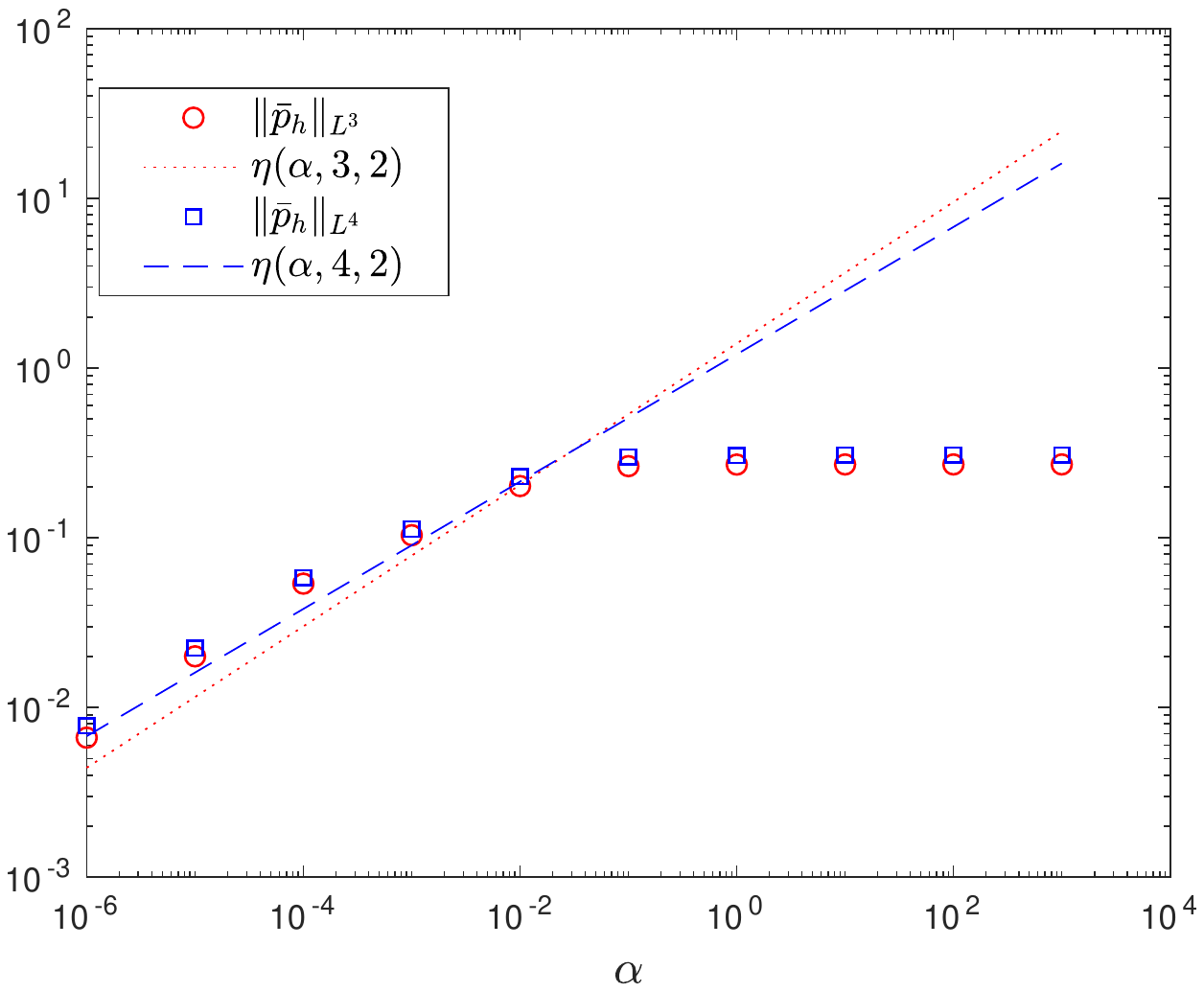}
                \caption{Case 1 with A2}
        \end{subfigure}\\%
        \begin{subfigure}[h!]{0.5\textwidth}
                \includegraphics[trim = 40mm 80mm 30mm 70mm, clip, width=\textwidth]{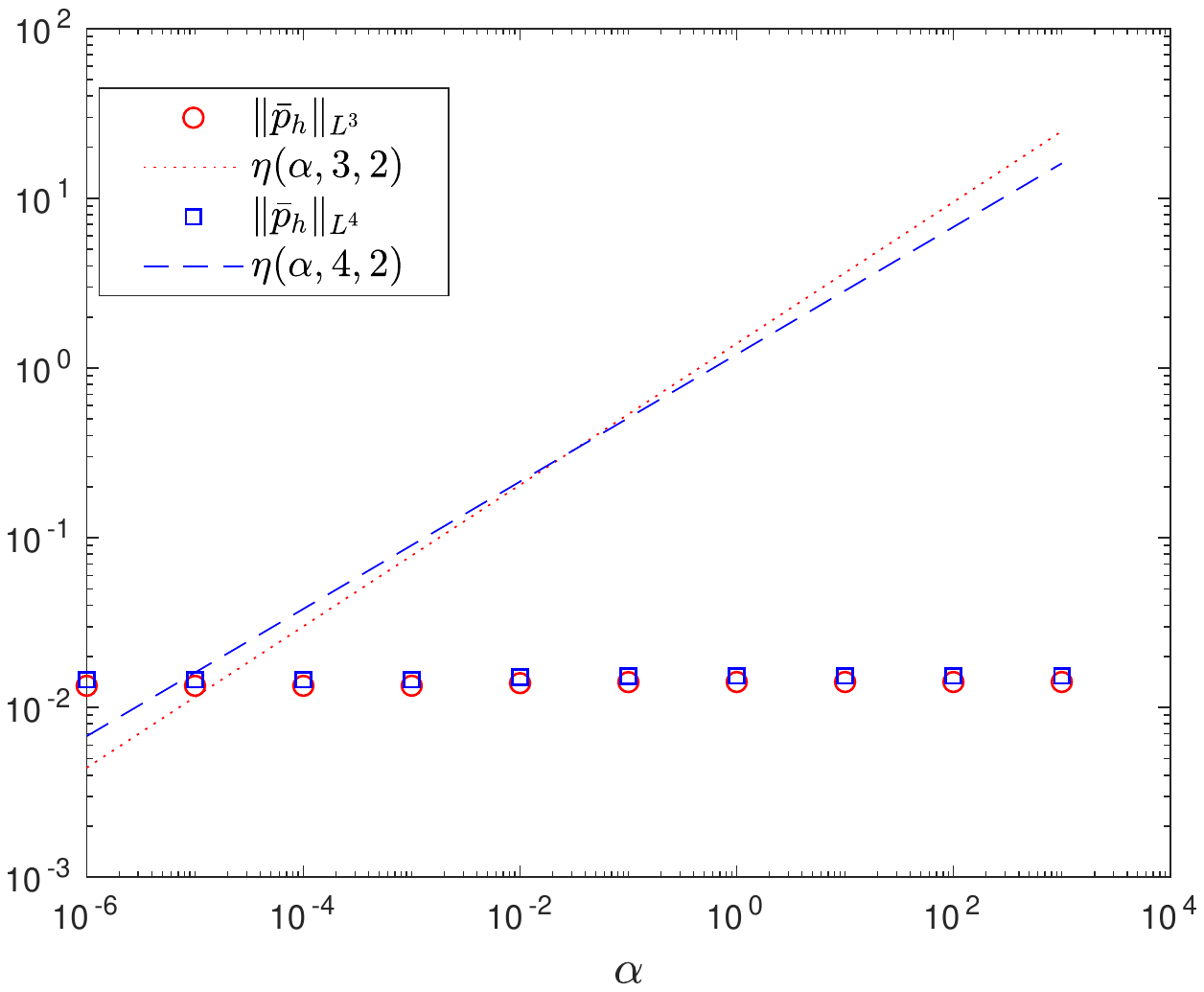}
                \caption{Case 2 with A1}
        \end{subfigure}\hfill%
        \begin{subfigure}[h!]{0.5\textwidth}
                \includegraphics[trim = 40mm 80mm 30mm 70mm, clip, width=\textwidth]{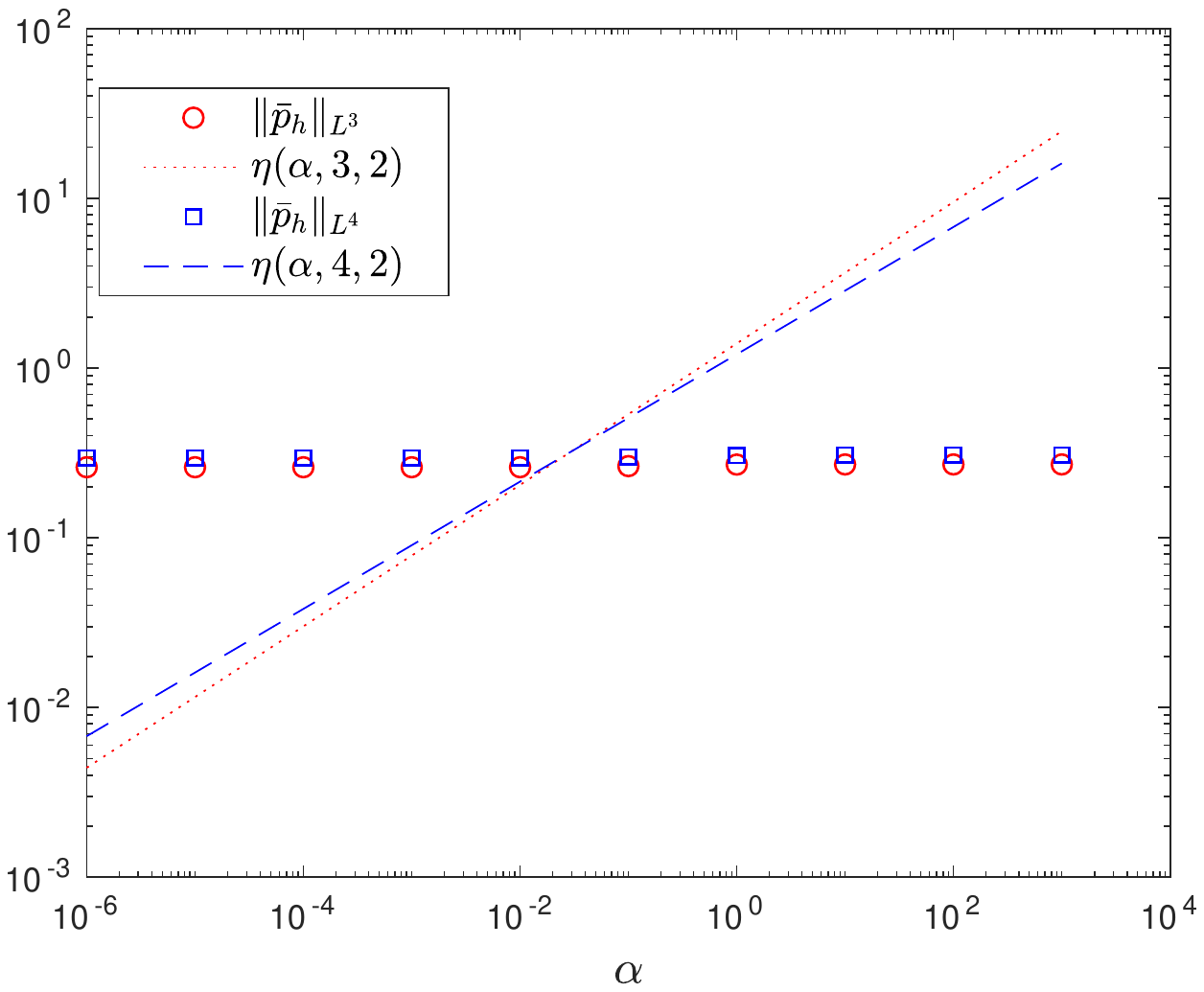}
                \caption{Case 2 with A2}
        \end{subfigure}\\%
        \begin{subfigure}[h!]{0.5\textwidth}
                \includegraphics[trim = 40mm 80mm 30mm 70mm, clip, width=\textwidth]{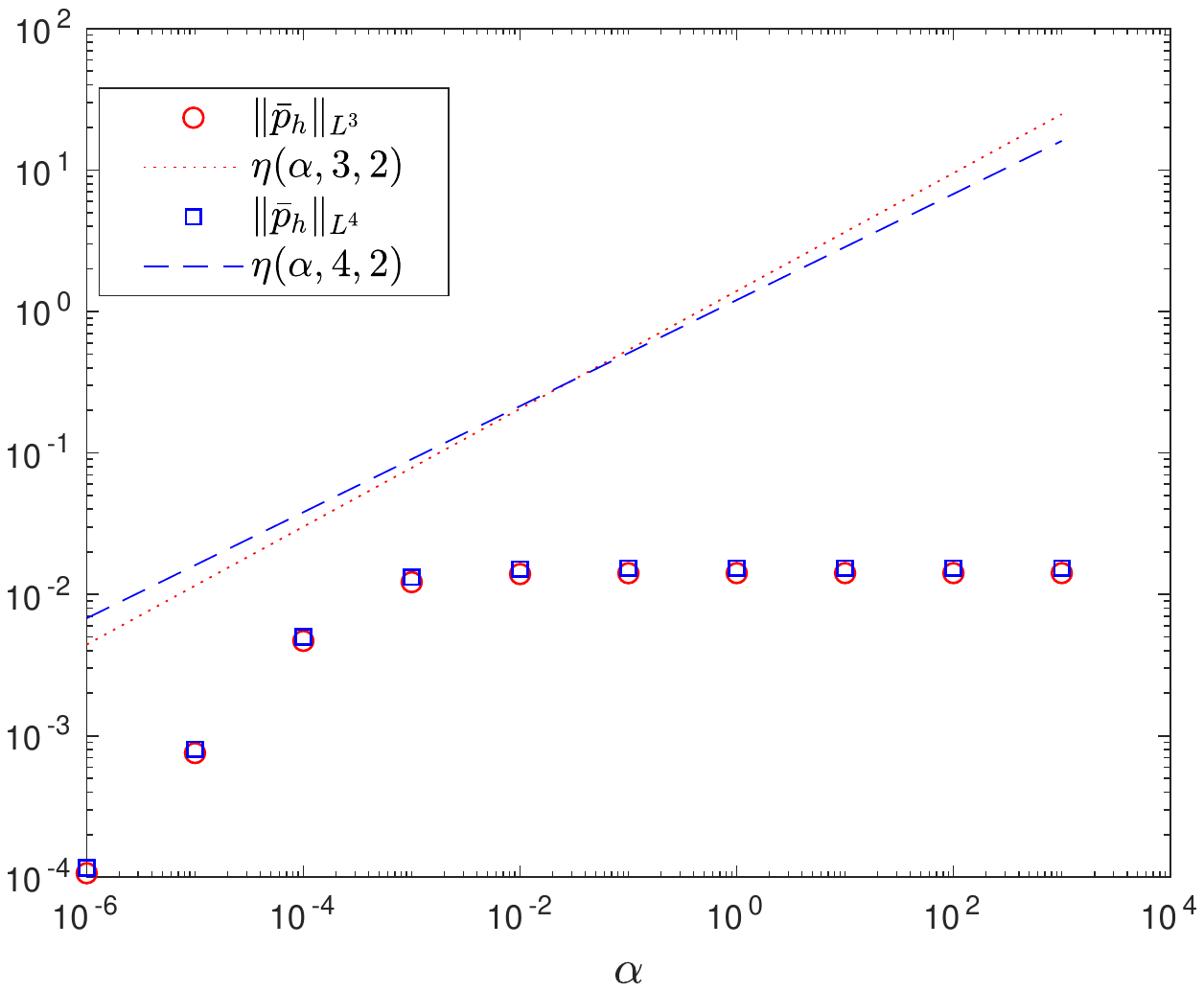}
                \caption{Case 3 with A1}
        \end{subfigure}\hfill%
        \begin{subfigure}[h!]{0.5\textwidth}
                \includegraphics[trim = 40mm 80mm 30mm 70mm, clip, width=\textwidth]{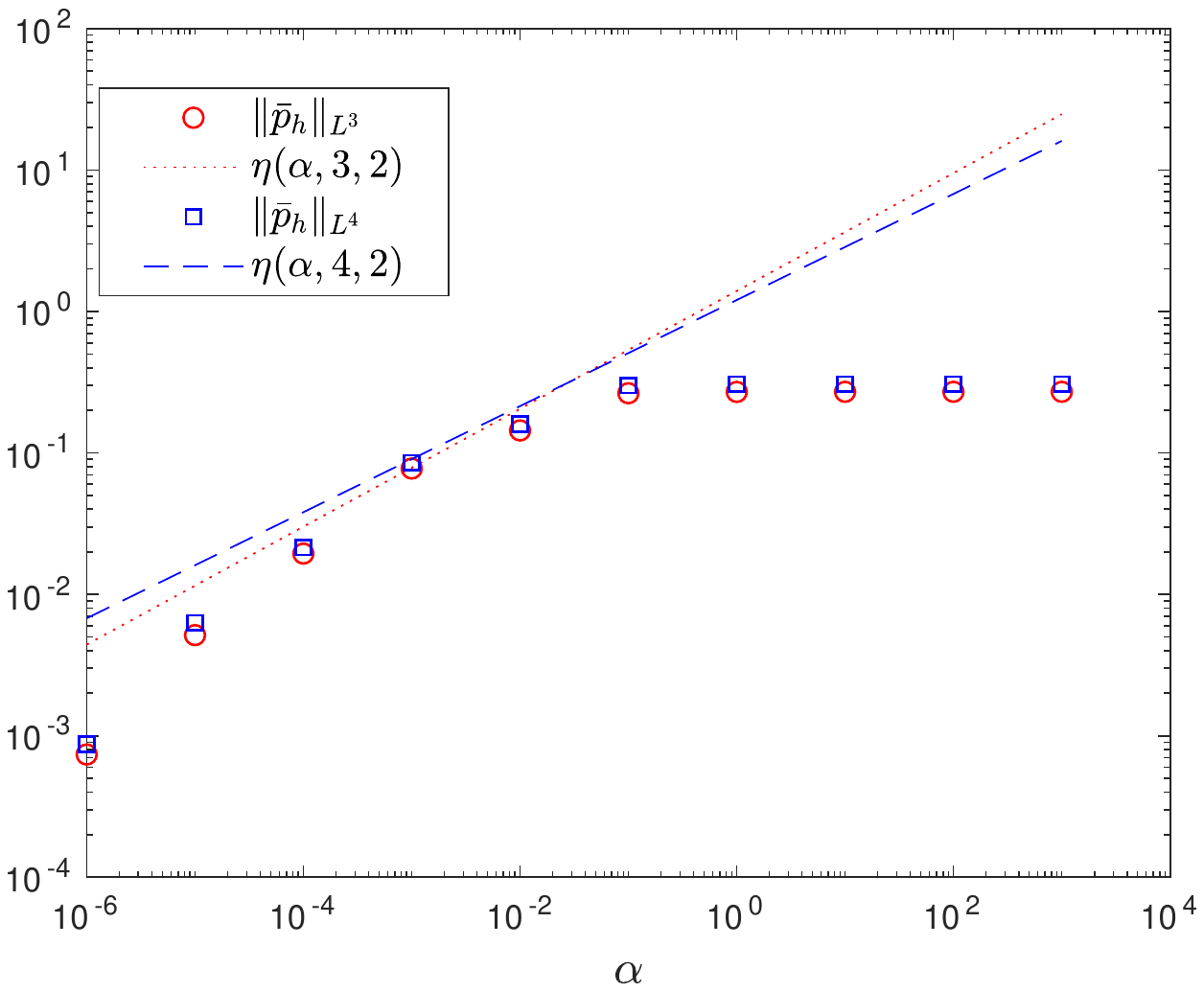}
                \caption{Case 3 with A2}
        \end{subfigure}%
        \caption{Results for $\phi(s)=s^3$}
        \label{F2}
\end{figure}

\begin{figure}[p]
        \centering
        \begin{subfigure}[h!]{0.5\textwidth}
                \includegraphics[trim = 40mm 80mm 30mm 70mm, clip, width=\textwidth]{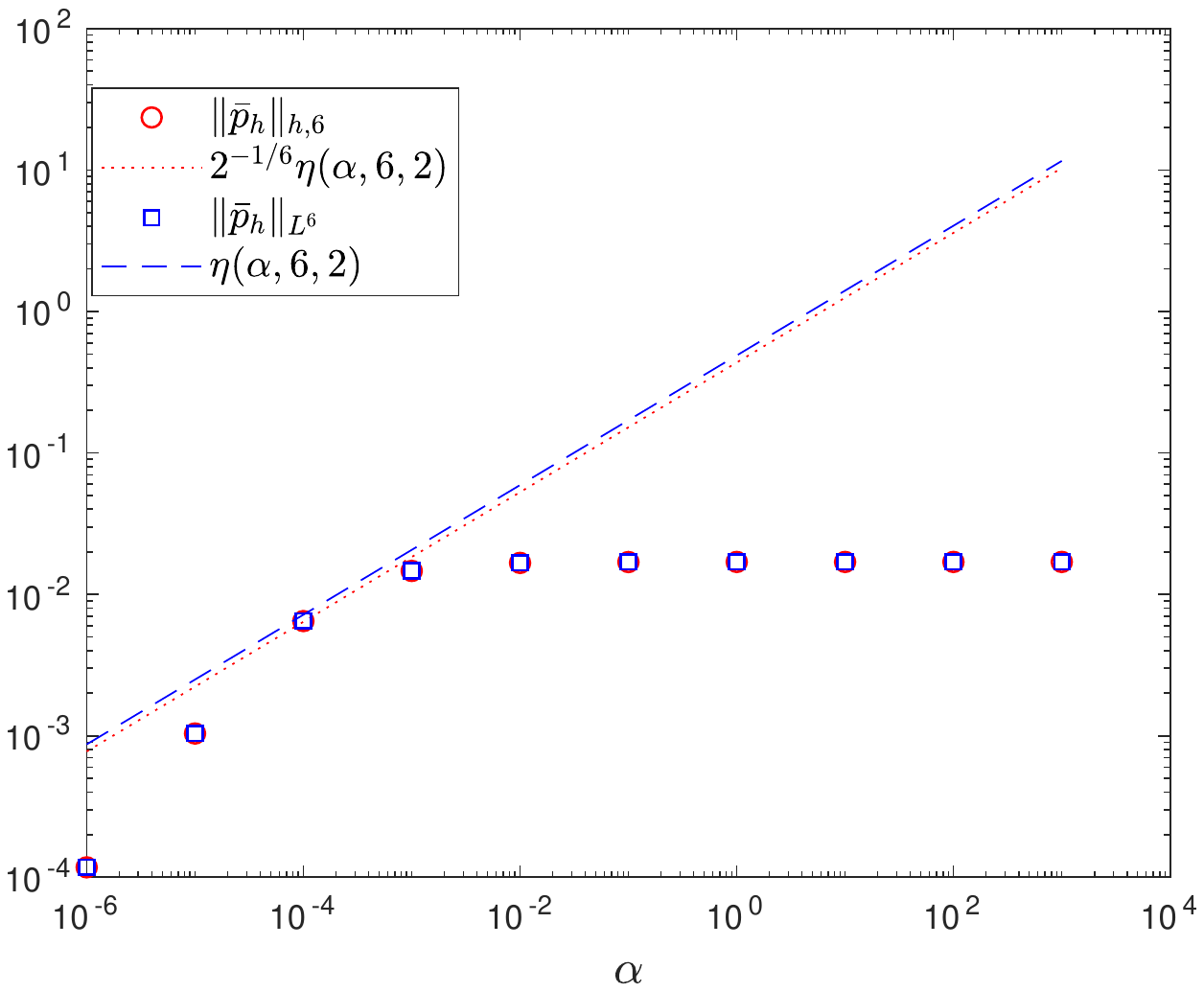}
                \caption{Case 1 with A1}
        \end{subfigure}\hfill%
         \begin{subfigure}[h!]{0.5\textwidth}
                \includegraphics[trim = 40mm 80mm 30mm 70mm, clip, width=\textwidth]{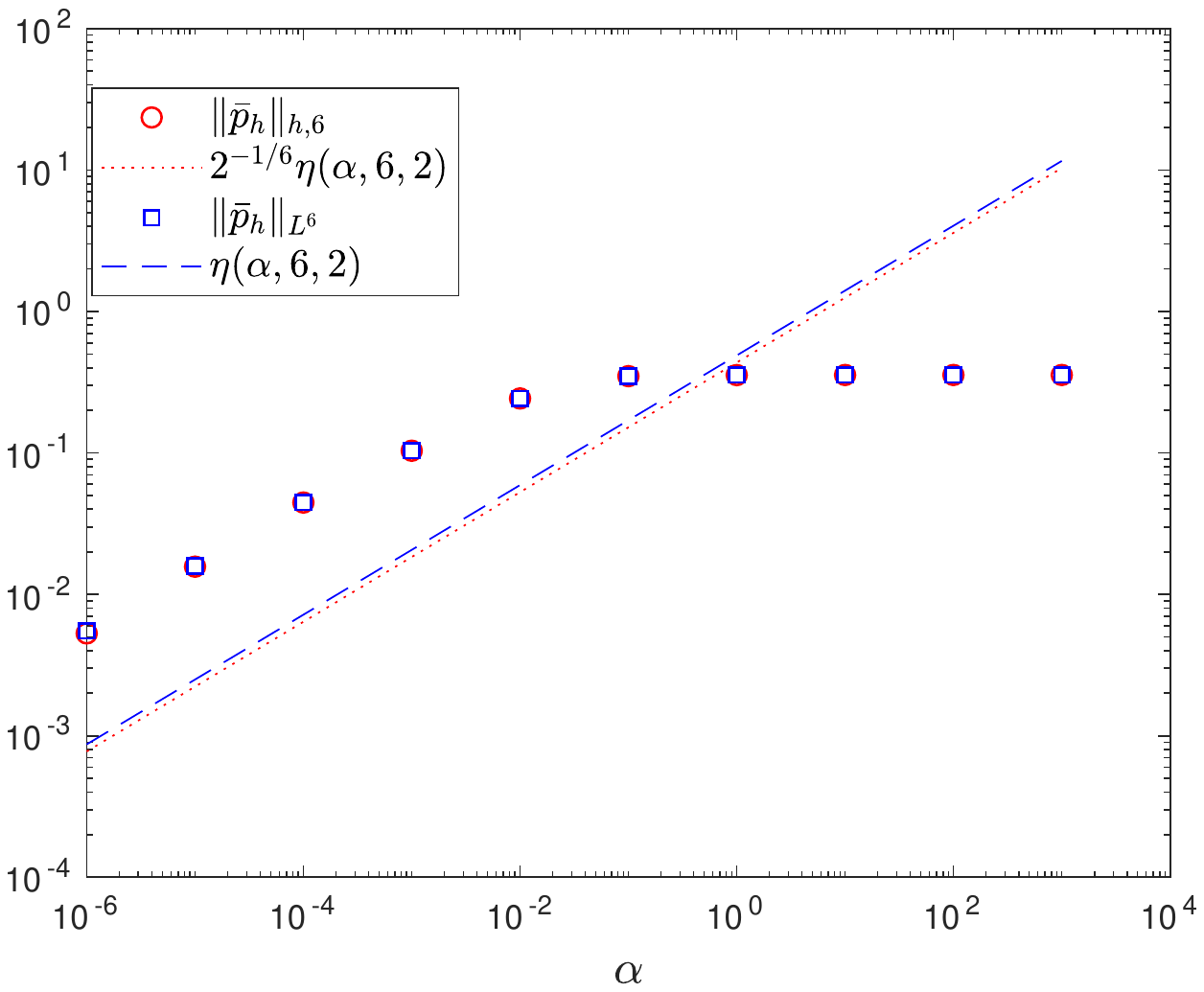}
                \caption{Case 1 with A2}
        \end{subfigure}\\%
        \begin{subfigure}[h!]{0.5\textwidth}
                \includegraphics[trim = 40mm 80mm 30mm 70mm, clip, width=\textwidth]{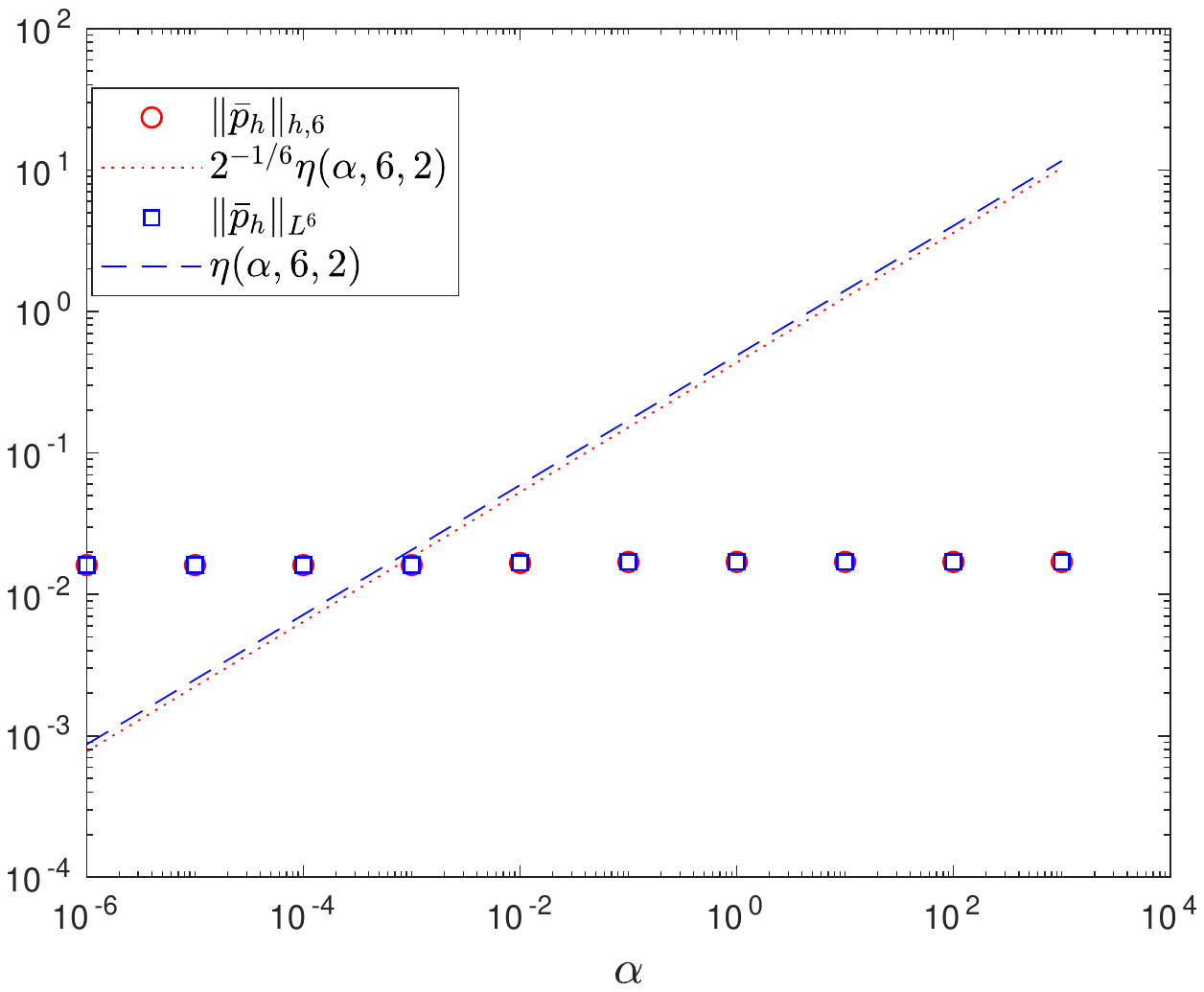}
                \caption{Case 2 with A1}
        \end{subfigure}\hfill%
        \begin{subfigure}[h!]{0.5\textwidth}
                \includegraphics[trim = 40mm 80mm 30mm 70mm, clip, width=\textwidth]{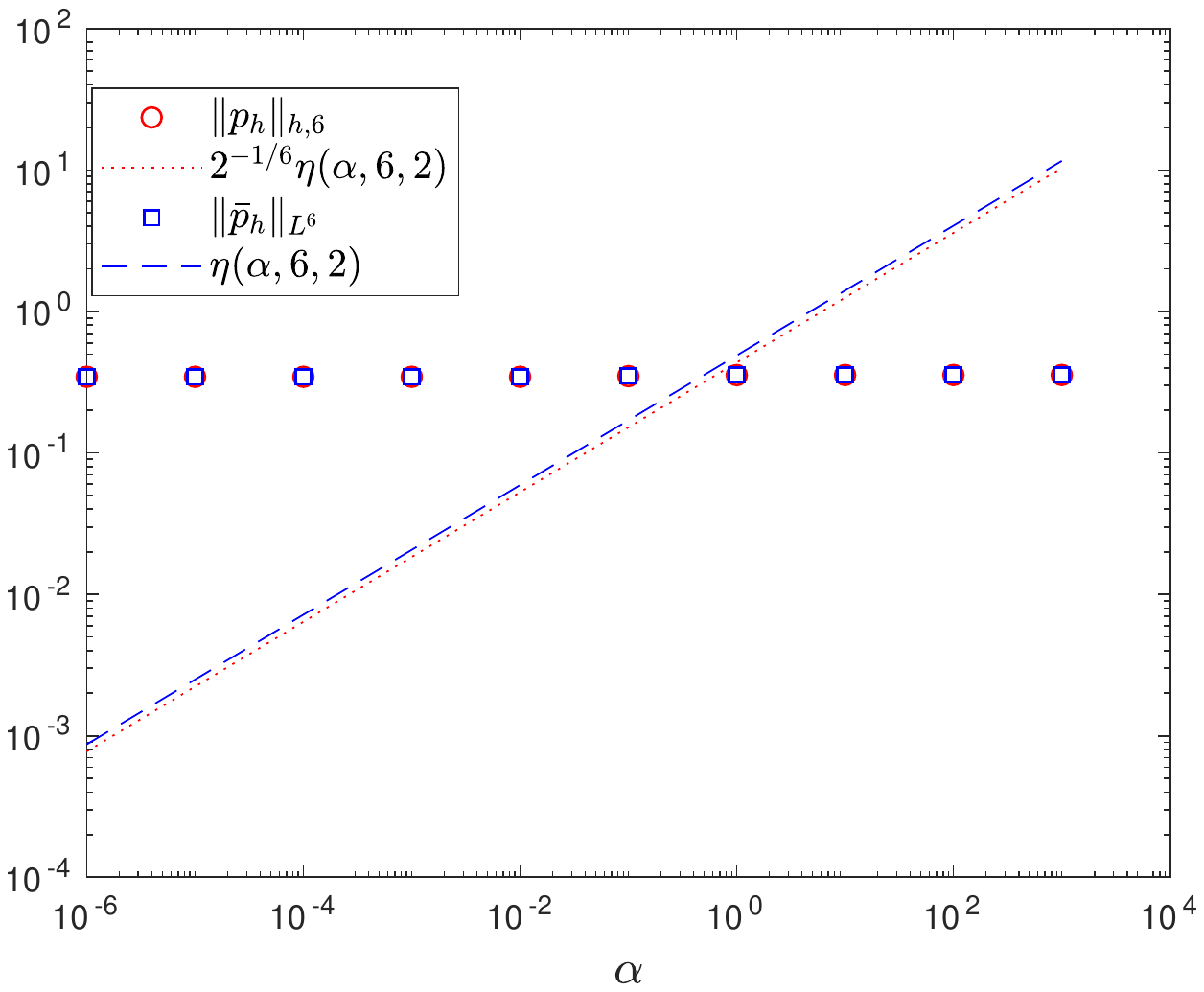}
                \caption{Case 2 with A2}
        \end{subfigure}\\%
        \begin{subfigure}[h!]{0.5\textwidth}
                \includegraphics[trim = 40mm 80mm 30mm 70mm, clip, width=\textwidth]{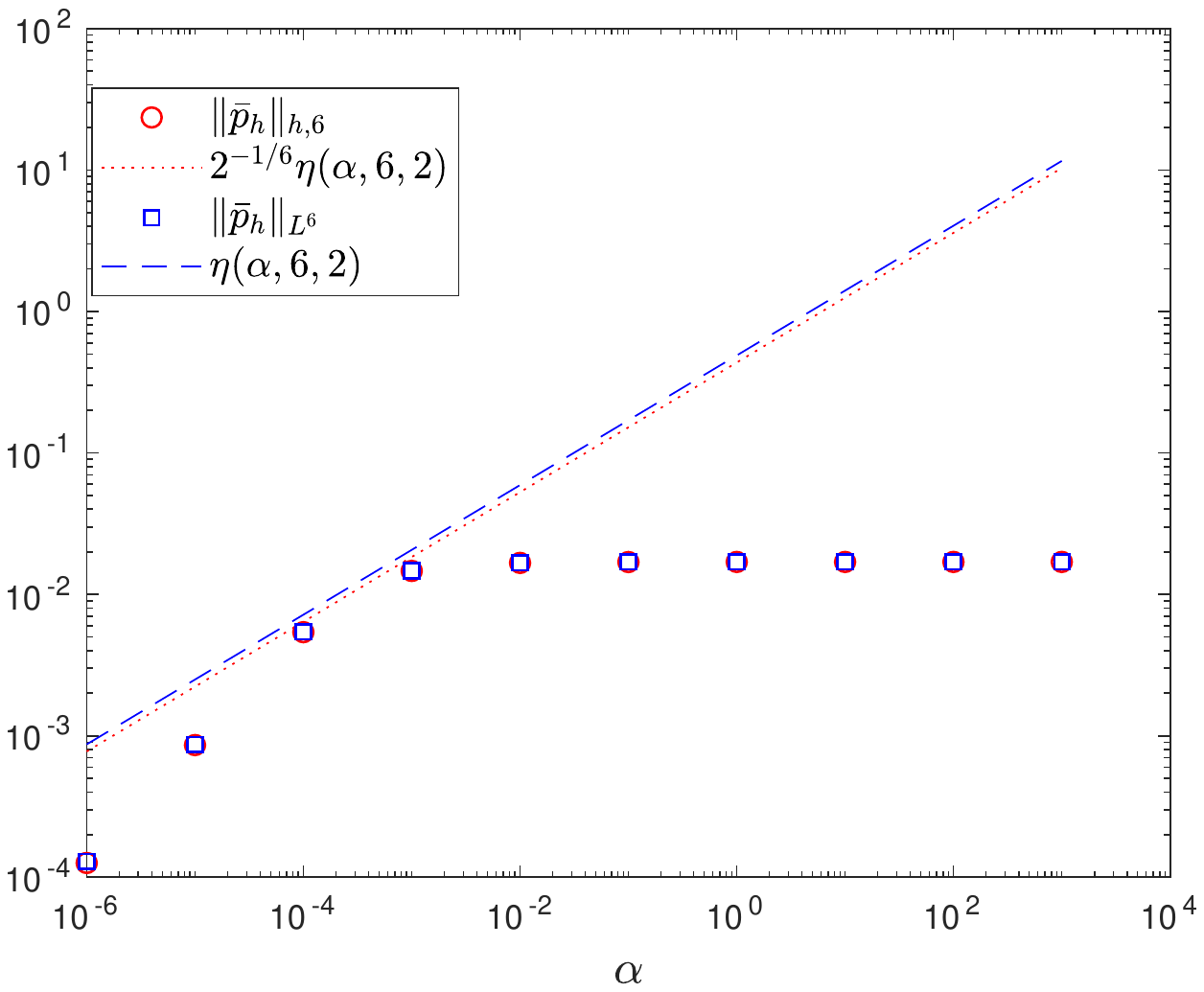}
                \caption{Case 3 with A1}
        \end{subfigure}\hfill%
        \begin{subfigure}[h!]{0.5\textwidth}
                \includegraphics[trim = 40mm 80mm 30mm 70mm, clip, width=\textwidth]{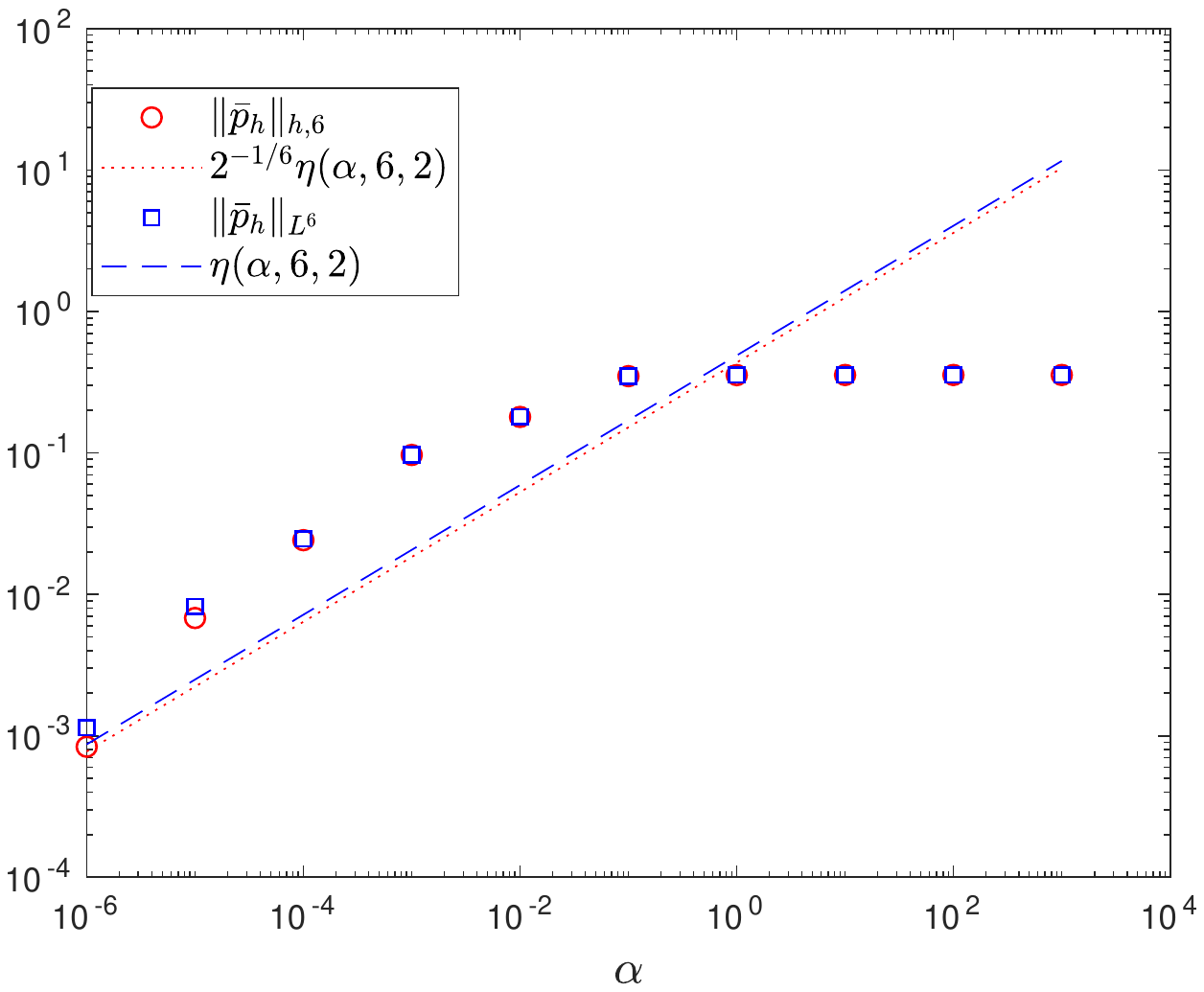}
                \caption{Case 3 with A2}
        \end{subfigure}%
        \caption{Results for $\phi(s)=s^5$}
        \label{F3}
\end{figure}

\setcounter{equation}{0}

\section{Appendix}

\begin{Lemma} \label{intest} Let $d=2$ and  $2 \leq q < \infty$.  Then
\begin{displaymath}
\Vert v_h \Vert_{L^q} \leq \Vert v_h \Vert_{h,q} \leq 4^{\frac{1}{q}} \Vert v_h \Vert_{L^q} \quad \mbox{ for all } v_h \in X_h.
\end{displaymath}
\end{Lemma}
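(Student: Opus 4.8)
Both inequalities decompose over the elements of $\mathcal{T}_h$, so the first thing I would do is record the elementwise picture. On a triangle $T\in\mathcal{T}_h$ with vertices $x_{T,1},x_{T,2},x_{T,3}$ the function $I_h[|v_h|^q]$ restricted to $T$ is the affine interpolant of the three numbers $|v_h(x_{T,k})|^q$; since the integral of an affine function over $T$ equals $|T|$ times the mean of its vertex values,
\[
\int_T I_h[|v_h|^q]\,dx=\frac{|T|}{3}\sum_{k=1}^3|v_h(x_{T,k})|^q ,
\]
and summing over $T$ yields $\|v_h\|_{h,q}^q=\sum_{i=1}^n w_i|v_h(x_i)|^q$ with weights $w_i:=\int_\Omega\phi_i\,dx>0$, $\sum_i w_i=|\Omega|$. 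The lower bound is then immediate: for $x\in T$, writing $x=\sum_k\lambda_k x_{T,k}$ in barycentric coordinates and using that $v_h$ is affine, $v_h(x)=\sum_k\lambda_k v_h(x_{T,k})$, so convexity of $s\mapsto|s|^q$ (Jensen's inequality) gives the pointwise bound $|v_h(x)|^q\le\sum_k\lambda_k|v_h(x_{T,k})|^q=I_h[|v_h|^q](x)$; integrating and summing over $T$ gives $\|v_h\|_{L^q}\le\|v_h\|_{h,q}$.

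For the upper bound the constant $4^{1/q}$ is exactly what an interpolation between the two endpoint cases produces. At $q=\infty$ there is equality, $\|v_h\|_{h,\infty}=\max_i|v_h(x_i)|=\|v_h\|_{L^\infty}$. At $q=2$ the displayed identity together with the elementwise mass--matrix formula $\int_T\ell^2\,dx=\frac{|T|}{12}\bigl(\sum_k a_k^2+(\sum_k a_k)^2\bigr)$, valid for the affine $\ell$ with $\ell(x_{T,k})=a_k$, gives $\int_T I_h[|\ell|^2]\,dx=\frac{|T|}{3}\sum_k a_k^2\le 4\int_T\ell^2\,dx$, hence $\|v_h\|_{h,2}\le 4^{1/2}\|v_h\|_{L^2}$. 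The plan is to regard $v_h\mapsto(v_h(x_i))_{i=1}^n$ as a linear map from $X_h$, equipped with the $L^q$--norm, into the weighted sequence space $\bigl(\mathbb{R}^n,\|\cdot\|_{\ell^q_w}\bigr)$, where $\|(c_i)\|_{\ell^q_w}^q:=\sum_i w_i|c_i|^q$: this map has operator norm $1$ for $q=\infty$ and $4^{1/2}$ for $q=2$, and by the Riesz--Thorin theorem its norm for $2\le q<\infty$ is then at most $(4^{1/2})^{2/q}\cdot 1^{1-2/q}=4^{1/q}$, which is the assertion.

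The delicate step --- the one I expect to be the main obstacle --- is making this interpolation rigorous: point evaluation $v_h\mapsto v_h(x_i)$ is not bounded on $L^q(\Omega)$, so Riesz--Thorin cannot be applied to the ambient $L^q$--spaces, and one has to argue on the finite-dimensional subspace $X_h$ and verify that $\bigl(X_h,\|\cdot\|_{L^q}\bigr)$ plays the role of the complex interpolation space between $\bigl(X_h,\|\cdot\|_{L^2}\bigr)$ and $\bigl(X_h,\|\cdot\|_{L^\infty}\bigr)$ for the analytic family at hand. A route that sidesteps interpolation is to try to prove the upper bound purely elementwise, i.e. to establish $\frac{|\hat T|}{3}\bigl(|a_1|^q+|a_2|^q+|a_3|^q\bigr)\le 4\int_{\hat T}|\ell_a|^q\,d\hat x$ on a reference triangle $\hat T$ for all $a=(a_1,a_2,a_3)\in\mathbb{R}^3$, where $\ell_a$ is the affine function on $\hat T$ with vertex values $a_k$ (the ratio of the two sides being invariant under affine change of variables); carrying this out amounts to locating the extremal nodal vector $a$, which is the genuine work on that path.
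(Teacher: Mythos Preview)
Your outline mirrors the paper's almost exactly: reduce to a single reference triangle, get the lower bound from convexity of $t\mapsto|t|^q$, compute the constant $4^{1/2}$ at $q=2$ and $1$ at $q=\infty$, and interpolate via Riesz--Thorin. The paper makes the target side a genuine $L^q$-scale by writing the lumped norm as $\|p\|_{L^q(\mu)}$ for the vertex measure $\mu=\sum_{j=0}^{2}\tfrac{1}{6}\,\delta_{\hat a_j}$, but the operator being interpolated is still point evaluation at the three vertices, and its $L^2$-bound holds only on the subspace $P_1(\hat T)$, not on all of $L^2(\hat T,d\hat x)$. So the ``delicate step'' you single out is precisely the step the paper glosses over.

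This is not a removable technicality: the upper inequality $\|v_h\|_{h,q}\le 4^{1/q}\|v_h\|_{L^q}$ is in fact \emph{false} for $q$ large. For the barycentric coordinate $p=\hat\phi_0$ on the reference simplex one computes
\[
\int_{\hat T}\hat I_h\bigl[\,|\hat\phi_0|^q\,\bigr]\,d\hat x=\frac{1}{6},\qquad
\int_{\hat T}|\hat\phi_0|^q\,d\hat x=\frac{1}{(q+1)(q+2)},
\]
so that $\|p\|_{h,q}^q/\|p\|_{L^q}^q=(q+1)(q+2)/6$, which exceeds $4$ whenever $(q+1)(q+2)>24$, i.e.\ for all $q>\tfrac{-3+\sqrt{97}}{2}\approx 3.42$; at $q=4$ the ratio is already $5$. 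The same computation applies elementwise to any nodal basis function $\phi_i\in X_h$. Consequently neither the Riesz--Thorin argument nor your proposed elementwise optimisation over $a\in\mathbb R^3$ can produce the constant $4^{1/q}$: the sharp elementwise constant is at least $\bigl((q+1)(q+2)/6\bigr)^{1/q}$. Your instinct that the interpolation step is where the proof lives or dies was correct --- it dies, and so does the lemma as stated.
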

\begin{proof} Let us denote by $\hat T \subset \mathbb{R}^2$ the unit simplex with vertices $\hat a_0=(0,0), \hat a_1=(1,0)$ and $\hat a_2=(0,1)$. Using a scaling argument it is
sufficient to show that 
\begin{equation}  \label{unit}
\int_{\hat T} | p |^q \, d \hat x \leq \int_{\hat T} \hat I_h [ | p |^q] d \hat x  \leq 4 \int_{\hat T} | p |^q \, d \hat x \quad \mbox{ for all } p \in P_1(\hat T),
\end{equation}
where $\hat I_h f=\sum_{j=0}^2 f(\hat a_j) \hat \phi_j$ and $\hat \phi_j(\hat a_i)=\delta_{ij}$. In order to see the first inequality in (\ref{unit}) we
observe that 
\begin{displaymath}
\int_{\hat T} | p |^q \, d \hat x = \int_{\hat T} | \sum_{j=0}^2 p(\hat a_j) \hat \phi_j |^q d \hat x \leq \int_{\hat T} \sum_{j=0}^2 | p(\hat a_j) |^q \hat \phi_j \, d \hat x
= \int_{\hat T} \hat I_h[ | p |^q] d \hat x
\end{displaymath}
in view of the convexity of $t \mapsto | t |^q$ and the properties of $\hat \phi_j, j=0,1,2$. Let us next consider the remaining estimate and first focus on the case $q=2$.
A straightforward calculation shows that
\begin{displaymath}
\int_{\hat T} \hat I_h [ | p |^2] d \hat x = \frac{1}{6} \sum_{j=0}^2 | p(\hat a_j)|^2, \quad \int_{\hat T} | p|^2 \, d \hat x = \frac{1}{24} \sum_{j=0}^2 | p(\hat a_j)|^2 + \frac{1}{24} | p(\frac{\hat a_0+\hat a_1+\hat a_2}{3}) |^2,
\end{displaymath}
which implies that
\begin{equation}  \label{qeq2}
\int_{\hat T} \hat I_h [ | p|^2] d \hat x \leq 4 \int_{\hat T} | p|^2 \, d \hat x.
\end{equation}
Let us introduce the measure $\mu:= \sum_{j=0}^2 m_j \delta_{\hat a_j}$ with $m_j=\int_{\hat T} \hat \phi_j d \hat x= \frac{1}{6}, j=0,1,2$. 
Clearly,
\begin{displaymath}
\Vert p \Vert_{L^q(\mu)}^q:= \int_{\hat T} | p |^q d \mu = \sum_{j=0}^2 | p(\hat a_j)|^q m_j = \int_{\hat T} \hat I_h [ |p|^q] d \hat x.
\end{displaymath}
Now, (\ref{qeq2}) yields that $\Vert p \Vert_{L^2(\mu)} \leq 2 \Vert p \Vert_{L^2(d \hat x)}$, while $\Vert p \Vert_{L^{\infty}(\mu)} \leq \Vert p \Vert_{L^{\infty}(d \hat x)}$, so that
the Riesz--Thorin convexity theorem implies that
\begin{displaymath}
\Vert p \Vert_{L^q(\mu)} \leq 2^{\frac{2}{q}} \Vert p \Vert_{L^q(d \hat x)} \quad \mbox{ for all } p \in P_1(\hat T),
\end{displaymath}
which is (\ref{unit}).
\end{proof}

\providecommand{\href}[2]{#2}
\providecommand{\arxiv}[1]{\href{http://arxiv.org/abs/#1}{arXiv:#1}}
\providecommand{\url}[1]{\texttt{#1}}
\providecommand{\urlprefix}{URL }

\end{document}